\newcommand{\comment}[1]{}
\newcommand{\R}{{\mathbb R}}
\newcommand{\pR}{{\mathbb R}\times\cdots\times {\mathbb R}}
\newcommand{\pint}{{\int_0^\infty}\cdots {\int_0^\infty}}
\newcommand{\pvarphi}{\varphi(t_1,\ldots,t_n)}
\newcommand{\pdt}{dt_1\ldots dt_n}
\newcommand{\pt}{t_1\ldots t_n}
\newcommand{\phardy}{H^1(\R\times\cdots\times\R)}
\def\H{{\mathcal H}}
\def\sign{{\rm {sign}\,}}
\newcounter{rea}
\newcounter{rek}
\newcounter{res}
\begin{document}
\title[]{The multi-parameter Hausdorff operators on $H^1$ and $L^p$}         
\author{Duong Quoc Huy} 
\address{Department of Natural Science and Technology, Tay Nguyen University, Daklak, Vietnam.}
\email{duongquochuy@ttn.edu.vn}
\author{Luong Dang Ky $^*$}
\address{Department of Mathematics, Quy Nhon University, 
170 An Duong Vuong, Quy Nhon, Binh Dinh, Viet Nam} 
\email{{\tt luongdangky@qnu.edu.vn}}
\keywords{Hausdorff operators, multi-parameter Hardy spaces, Hilbert transforms, maximal functions}
\subjclass[2010]{47B38 (42B30)}
\thanks{This work is supported by Vietnam National Foundation for Science and Technology Development (Grant No. 101.02-2017.304)}
\thanks{$^*$Corresponding author}

\begin{abstract} 
	In the present paper, we characterize the  nonnegative  functions $\varphi$ for which the multi-parameter Hausdorff operator $\mathcal H_\varphi$ generated by $\varphi$ is bounded on the multi-parameter Hardy space $H^1(\mathbb R\times\cdots\times\mathbb R)$ or $L^p(\mathbb R^n)$, $p\in [1,\infty]$.  The corresponding operator norms are also obtained.  Our results improve some recent results in \cite{FZ, LM2, LM3, We} and give an answer to an open question posted by Liflyand \cite{Li}.
	
\end{abstract}

\maketitle
\newtheorem{theorem}{Theorem}[section]
\newtheorem{lemma}{Lemma}[section]
\newtheorem{proposition}{Proposition}[section]
\newtheorem{remark}{Remark}[section]
\newtheorem{corollary}{Corollary}[section]
\newtheorem{definition}{Definition}[section]
\newtheorem{example}{Example}[section]
\numberwithin{equation}{section}
\newtheorem{Theorem}{Theorem}[section]
\newtheorem{Lemma}{Lemma}[section]
\newtheorem{Proposition}{Proposition}[section]
\newtheorem{Remark}{Remark}[section]
\newtheorem{Corollary}{Corollary}[section]
\newtheorem{Definition}{Definition}[section]
\newtheorem{Example}{Example}[section]
\newtheorem*{theorema}{Theorem A}

\section{Introduction and main result} 
\allowdisplaybreaks

Let $\varphi$ be a locally integrable function on $(0,\infty)$. The classical one-parameter Hausdorff operator $\H_\varphi$ is defined for suitable functions $f$ on $\R$ by
$$\H_\varphi f(x)=\int_0^\infty f\left(\frac{x}{t}\right) \frac{\varphi(t)}{t} dt.$$
The Hausdorff operator $\H_\varphi$ is an interesting operator in harmonic analysis. There are many classical operators in analysis which are special cases of the Hausdorff operator if one chooses suitable kernel functions $\varphi$, such as the classical Hardy operator, its adjoint operator, the Ces\`aro type operators, the Riemann-Liouville fractional integral operator. See the survey article \cite{Li13} and the references therein. In the recent years, there is an increasing interest in the study of boundedness of the Hausdorff operator on some function spaces, see for example \cite{An, CFZ, FZ, HKQ1, HKQ2, Li, Li13, LM1, LM2, LM3, WLY, We, WC}.

When $\varphi$ is a locally integrable function on $(0,\infty)^n$, there are several high-dimensional extensions of $\H_\varphi$. One of them is the {\sl multi-parameter Hausdorff operator} $\H_\varphi$ defined for suitable functions $f$ on $\R^n$ by
$$\H_\varphi f(x_1,\ldots,x_n)=\int_0^\infty\cdots\int_0^\infty f\left(\frac{x_1}{t_1},\ldots,\frac{x_n}{t_n}\right) \frac{\varphi(t_1,\ldots,t_n)}{t_1\ldots t_n} dt_1\cdots dt_n.$$

Let $\Phi^{(1)}, \ldots, \Phi^{(n)}$ be $C^\infty$-functions with compact support satisfying $\int_{\R}\Phi^{(1)}(x)dx=\cdots= \int_{\R}\Phi^{(n)}(x)dx=1$. Then, for any $(t_1,\ldots,t_n)\in (0,\infty)^n$, we denote
$$\otimes_{j=1}^n \Phi^{(j)}_{t_j}({\bf x}):= \prod_{j=1}^n \frac{1}{t_j}\Phi^{(j)}\left(\frac{x_j}{t_j}\right),\quad {\bf x}=(x_1,\ldots,x_n)\in\R^n.$$

 Following Gundy and Stein \cite{GS}, we define the {\it multi-parameter Hardy space} $H^1(\R\times\cdots\times\R)$ as the set of all functions $f\in L^1(\R^n)$ such that 
$$\|f\|_{H^1(\R\times\cdots\times\R)} := \left\|M_{\Phi} f\right\|_{L^1(\R)}<\infty,$$
where $M_{\Phi} f$ is the {\it multi-parameter smooth maximal function} of $f$ defined by
$$M_{\Phi}f({\bf x})= \sup_{(t_1,\ldots,t_n)\in (0,\infty)^n}|f*(\otimes_{j=1}^n\Phi^{(j)}_{t_j})({\bf x})|,\quad {\bf x}\in\R^n.$$

\begin{remark}\label{H1 implies Hn}
	\begin{enumerate}[\rm (i)]
		\item $\|\cdot\|_{\phardy}$ defines a norm on $\phardy$, whose size depends on the choice of $\{\Phi^{(j)}\}_{j=1}^n$, but the space $\phardy$ does not depend on this choice.
		\item If $f$ is in $H^1(\R)$, then the function
		$$f\otimes\cdots\otimes f({\bf x})= \prod_{j=1}^n f(x_j),\quad {\bf x}=(x_1,\ldots,x_n)\in \R^n,$$
		is in $\phardy$. Moreover, there exist two positive constants $C_1, C_2$ independent of $f$ such that
		$$C_1\|f\|_{H^1(\R)}^n \leq \left\|f\otimes\cdots\otimes f\right\|_{\phardy}\leq C_2 \|f\|_{H^1(\R)}^n.$$
	\end{enumerate}	 
\end{remark}

In the setting of two-parameter, Liflyand and M\'oricz  showed  in \cite{LM2} that $\H_\varphi$ is bounded on $H^1(\R\times\R)$ provided $\varphi\in L^1((0,\infty)^2)$. In the setting of $n$-parameter, one of Weisz's important results (see \cite[Theorem 7]{We}) showed that $\H_\varphi$ is bounded on $\phardy$ provided $\varphi(t_1,\ldots,t_n)= \prod_{i=1}^n \varphi_i(t_i)$ with $\varphi_i\in L^1(\R)$ for all $1\leq i\leq n$. Recently, in the setting of two-parameter, Fan and Zhao showed in \cite{FZ} that the condition $\varphi\in L^1((0,\infty)^2)$  is also a necessary condition for $H^1(\R\times\R)$-boundedness of $\H_\varphi$ if $\varphi$ is nonnegative valued. However, it seems that Fan-Zhao's method can not be used to obtain the exact norm of $\H_\varphi$ on $H^1(\R\times\R)$. So, in the setting of $n$-parameter, a natural question arises: Can one find the exact norm of $\H_\varphi$ on $\phardy$? Very recently, in the setting of one-parameter, this question was solved by Hung, Ky and Quang \cite{HKQ1}.

Motivated by the above question and an open question posted by Liflyand \cite[Problem 5]{Li}, we characterize the  nonnegative  functions $\varphi$ for which $\H_\varphi$  is bounded on  $\phardy$. More precisely, our main result is the following:

\begin{theorem}\label{main theorem}
	Let $\varphi$ be a nonnegative function in $L^1_{\rm loc}((0,\infty)^n)$. Then $\H_\varphi$ is bounded on $H^1(\R\times\cdots\times\R)$ if and only if
	\begin{equation}\label{main inequality}
	\int_0^\infty\cdots \int_0^\infty \varphi(t_1,\ldots,t_n)dt_1\cdots dt_n<\infty.
	\end{equation}
	Moreover, in that case,	
	$$\|\H_\varphi\|_{H^1(\R\times\cdots\times\R)\to H^1(\R\times\cdots\times\R)}= \int_0^\infty\cdots \int_0^\infty \varphi(t_1,\ldots,t_n)dt_1\cdots dt_n.$$
\end{theorem}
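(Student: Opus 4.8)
The plan is to prove the norm identity as two matching inequalities: an upper bound $\|\H_\varphi\|_{\phardy\to\phardy}\le I(\varphi)$ and a lower bound $\|\H_\varphi\|_{\phardy\to\phardy}\ge I(\varphi)$, where $I(\varphi):=\pint\pvarphi\pdt$. The upper bound gives sufficiency together with the estimate from above on the norm, while the lower bound gives both the exact value of the norm and the necessity of \eqref{main inequality} (the case $I(\varphi)=\infty$ being read off by contradiction). The engine for both is a scaling identity: writing $D_tf(\mathbf x)=f(x_1/t_1,\ldots,x_n/t_n)$ so that $\H_\varphi f=\pint\frac{\pvarphi}{\pt}D_tf\,\pdt$, a change of variables in each defining convolution yields $M_\Phi(D_tf)(\mathbf x)=(M_\Phi f)(x_1/t_1,\ldots,x_n/t_n)$, and hence $\|D_tf\|_{\phardy}=\pt\,\|f\|_{\phardy}$. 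First I would establish this lemma carefully, since it is exactly the scaling that converts the weight $1/\pt$ in $\H_\varphi$ into the clean constant $I(\varphi)$.

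For the upper bound I would exploit $\varphi\ge0$ to pull the smooth maximal function inside the $t$-integral. For every scale $(s_1,\ldots,s_n)$ one has $|(\H_\varphi f)*(\otimes_{j=1}^n\Phi^{(j)}_{s_j})(\mathbf x)|\le\pint\frac{\pvarphi}{\pt}\,M_\Phi(D_tf)(\mathbf x)\,\pdt$; taking the supremum in $s$ and integrating in $\mathbf x$, Tonelli's theorem (legitimate because every term is nonnegative) gives
\begin{equation*}
\|\H_\varphi f\|_{\phardy}\le\pint\frac{\pvarphi}{\pt}\,\|D_tf\|_{\phardy}\,\pdt=I(\varphi)\,\|f\|_{\phardy}.
\end{equation*}
This is the entire sufficiency direction and shows $\|\H_\varphi\|\le I(\varphi)$.

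For the lower bound I would pass to the dual, using $(\phardy)^*=\BMO(\R\times\cdots\times\R)$ and $\|\H_\varphi\|_{\phardy\to\phardy}=\|\H_\varphi^*\|_{\BMO\to\BMO}$. The point is that $g(\mathbf x)=\prod_{j=1}^n\log|x_j|$ is, modulo functionals that annihilate $\phardy$, an eigenfunction of the adjoint with eigenvalue $I(\varphi)$. Indeed $\H_\varphi^*g(\mathbf y)=\pint\pvarphi\,g(t_1y_1,\ldots,t_ny_n)\,\pdt$, and expanding $\prod_j(\log|t_j|+\log|y_j|)$ produces the single top term $I(\varphi)\,g$ together with terms depending on fewer than $n$ of the variables $y_j$; each such lower term is independent of some $y_{j_0}$ and therefore pairs to $0$ with every $f\in\phardy$, by the mean-zero-in-each-variable property. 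Consequently $\la\H_\varphi f,g\ra=I(\varphi)\,\la f,g\ra$ for all $f\in\phardy$, so $\|\H_\varphi f\|_{\phardy}\ge I(\varphi)\,|\la f,g\ra|/\|g\|_{\BMO}$; choosing $f$ that nearly norms $g$ gives $\|\H_\varphi\|\ge I(\varphi)$. If $I(\varphi)=\infty$ I would apply the same computation to the truncations $\varphi_N=\varphi\,\chi_{[1/N,N]^n}$, whose kernels are bounded away from $0$ and $\infty$, conclude $\|\H_{\varphi_N}\|\ge I(\varphi_N)\to\infty$, and deduce that $\H_\varphi$ cannot be bounded.

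The main obstacle is the rigorous part of this last step: one must verify that $g=\prod_{j=1}^n\log|x_j|$ genuinely lies in $\BMO(\R\times\cdots\times\R)$ with $0<\|g\|_{\BMO}<\infty$ and defines a nonzero functional on $\phardy$. Here the product structure is essential and delicate — the naive candidate $\sum_j\log|x_j|$, being a sum of one-variable functions, annihilates $\phardy$ and is useless — and product $\BMO$ membership is subtler than rectangle oscillation. One must also justify the Fubini step defining $\H_\varphi^*g$ in the presence of the sign-changing factors $\log|t_j|$. As a safeguard that avoids product-$\BMO$ membership altogether, I would be prepared to replace the duality argument by an explicit extremizing sequence built as tensor products $u_N\otimes\cdots\otimes u_N$ (which lie in $\phardy$ by Remark \ref{H1 implies Hn}), where $u_N\in H^1(\R)$ are one-dimensional near-extremizers chosen so that the Minkowski estimate above is asymptotically saturated; verifying that the sup in $s$ is nearly attained at a common scale with a coherent sign across $\operatorname{supp}\varphi$ is the technical heart of that alternative.
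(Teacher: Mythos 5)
Your upper bound is correct and is essentially the paper's argument (Lemma \ref{key lemma}(i)): the pointwise inequality $M_\Phi(\H_\varphi f)\le \H_\varphi(M_\Phi f)$ plus Minkowski/Tonelli and the scaling $\|D_tf\|_{\phardy}=t_1\cdots t_n\|f\|_{\phardy}$ give $\|\H_\varphi\|\le\pint\pvarphi\pdt$. The problem is the lower bound, where your primary route (duality with product $\BMO$ and the putative eigenfunction $g=\prod_j\log|x_j|$) rests on several claims that are not established and are each genuinely hard: (a) that $\prod_j\log|x_j|$ lies in Chang--Fefferman product $\BMO$ and defines a nonzero bounded functional on $\phardy$ --- product $\BMO$ is characterized by a Carleson packing condition over open sets, not by rectangle oscillation, and tensor products of one-variable $\BMO$ functions do not automatically satisfy it; (b) that the lower-order terms $c_S\prod_{j\in S}\log|y_j|$ annihilate $\phardy$ --- the $H^1$--$\BMO$ pairing is not an absolutely convergent integral, so "zero marginal in $x_{j_0}$" does not by itself kill the pairing, and moreover the coefficients $c_S=\pint\pvarphi\prod_{j\notin S}\log t_j\,\pdt$ need not even be finite under \eqref{main inequality} alone (so one must truncate from the start, not only in the $I(\varphi)=\infty$ case); and (c) as written, dividing by $\|g\|_{\BMO}$ and "nearly norming $g$" only yields $\|\H_\varphi\|\ge c\,I(\varphi)$ with the (non-unit) duality constant, so you would have to work with the dual norm $\sup_{\|f\|_{\phardy}\le1}|\la f,g\ra|$ itself to get the sharp constant. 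None of these is a routine verification, so the sharpness half of your proof has real gaps.

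Your fallback --- explicit tensor-product near-extremizers --- is exactly the paper's route, and the paper supplies the two ingredients you identify as the technical heart. For necessity it does not need extremizers at all: it tests $\H_\varphi$ on $f\otimes\cdots\otimes f$ with $f(x)=x/(1+x^2)^2\in H^1(\R)$, whose image under $\H_\varphi$ has constant sign on the positive orthant, so the $L^1$ norm computes $\pint\pvarphi\pdt$ directly (Lemma \ref{the necessary condition}). For the sharp lower bound it first restricts to $\varphi_\delta=\varphi\chi_{[\delta,1]^n}$ and takes $f_\varepsilon$ to be the boundary value of $F_\varepsilon(z)=\prod_j(z_j+i)^{-1-\varepsilon}$, for which the Poisson representation of Theorem \ref{boundary value function} gives the quantitative estimate $\|\H_{\varphi_\delta}f_\varepsilon-f_\varepsilon\int\varphi_\delta\|_{\phardy}/\|f_\varepsilon\|_{\phardy}\to0$ as $\varepsilon\to0$ (with constants blowing up only like $\delta^{-1-n}$, which is harmless for fixed $\delta$); this resolves your "coherent sign at a common scale" worry because $\H_{\varphi_\delta}f_\varepsilon$ is shown to be close to a fixed multiple of $f_\varepsilon$ in norm, not merely to saturate Minkowski. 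Finally, the restriction to supp$\,\varphi\subset[0,1]^n$ is removed by the dilation--truncation identity $\|\H_{\varphi_m(\cdot/m)}\|=m^n\|\H_{\varphi_m}\|$ with $\varphi_m({\bf t})=\varphi(m{\bf t})\chi_{(0,1)^n}({\bf t})$, letting $m\to\infty$. If you pursue your own write-up, you should either carry out the product-$\BMO$ program in full (a substantial undertaking) or adopt this truncation-plus-explicit-extremizer scheme.
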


Theorem \ref{main theorem} not only gives an affirmative answer to the above question, but also gives an answer to \cite[Problem 5]{Li}. It should be pointed out that  the norm of the Hausdorff operator $\H_\varphi$ ($\pint\pvarphi\pdt$) does not depend on the choice of the above functions $\{\Phi^{(j)}\}_{j=1}^n$, moreover, it still holds when the above norm $\|\cdot\|_{\phardy}$ is replaced by
$$\|f\|_{\phardy}:=\sum_{{\bf e}\in \{0,1\}^n} \|{\bf H_e} f\|_{L^1(\R^n)},$$
where ${\bf H_e} f$'s are the {\sl multi-parameter Hilbert transforms} of $f$. See Theorem \ref{replace by the Hilbert transforms} for details.

Also we characterize the  nonnegative  functions $\varphi$ for which $\H_\varphi$  is bounded on $L^p(\R^n)$, $p\in [1,\infty]$. Our next result can be stated as follows.

\begin{theorem}\label{sharp constants for Lp}
	Let $p\in [1,\infty]$ and let $\varphi$ be a nonnegative function in $L^1_{\rm loc}((0,\infty)^n)$. Then $\H_\varphi$ is bounded on $L^p(\R^n)$ if and only if
	\begin{equation}\label{main inequality 1}
	\int_0^\infty\cdots \int_0^\infty \frac{\varphi(t_1,\ldots,t_n)}{t_1^{1-1/p}\ldots t_n^{1-1/p}}dt_1\cdots dt_n<\infty.
	\end{equation}
	Moreover, in that case,
	$$
	\|\H_\varphi\|_{L^p(\R^n)\to L^p(\R^n)}=\int_0^\infty\cdots \int_0^\infty \frac{\varphi(t_1,\ldots,t_n)}{t_1^{1-1/p}\ldots t_n^{1-1/p}}dt_1\cdots dt_n.
	$$
\end{theorem}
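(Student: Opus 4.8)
The plan is to prove the two inequalities between the operator norm and the quantity
$A_p:=\int_0^\infty\cdots\int_0^\infty \varphi(t_1,\ldots,t_n)\,(t_1\cdots t_n)^{1/p-1}\,dt_1\cdots dt_n$
separately, and to read off both the characterization and the exact value of the norm from them. The upper bound $\|\H_\varphi\|_{L^p(\R^n)\to L^p(\R^n)}\le A_p$ is the easy half: viewing $\H_\varphi f$ as a superposition of the dilates $f(x_1/t_1,\ldots,x_n/t_n)$ against the nonnegative weight $\varphi(t)/(t_1\cdots t_n)$, I would apply Minkowski's integral inequality together with the elementary scaling identity $\|f(\cdot/t_1,\ldots,\cdot/t_n)\|_{L^p(\R^n)}=(t_1\cdots t_n)^{1/p}\|f\|_{L^p(\R^n)}$. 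This yields $\|\H_\varphi f\|_p\le A_p\|f\|_p$ for $1\le p<\infty$, and the case $p=\infty$ follows even more directly by pulling the supremum through the integral. In particular \eqref{main inequality 1} is sufficient for boundedness.

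The reverse inequality $\|\H_\varphi\|_{L^p(\R^n)\to L^p(\R^n)}\ge A_p$ is where the real work lies, and it simultaneously gives the necessity of \eqref{main inequality 1} (an infinite $A_p$ forcing the operator to be unbounded). The guiding heuristic is that the critical power $f_0(\mathbf{x})=\prod_j x_j^{-1/p}\mathbf{1}_{(0,\infty)^n}$ is a formal eigenfunction, $\H_\varphi f_0=A_p f_0$, since $\varphi\ge0$ and the dilation structure reproduce $f_0$ with multiplier exactly $A_p$. As $f_0\notin L^p$, I would regularize it by the tensor-power test functions $f_\varepsilon(\mathbf{x})=\prod_{j=1}^n x_j^{-1/p-\varepsilon}\mathbf{1}_{\{x_j\ge 1\}}$ for small $\varepsilon>0$. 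A direct computation gives $\|f_\varepsilon\|_p=(p\varepsilon)^{-n/p}$ and $\H_\varphi f_\varepsilon(\mathbf{x})=\prod_j x_j^{-1/p-\varepsilon}\,I_\varepsilon(\mathbf{x})$, where $I_\varepsilon(\mathbf{x})=\int_{\{t_j\le x_j\,\forall j\}}\varphi(t)\prod_j t_j^{1/p+\varepsilon-1}\,dt$ increases to $A_\varepsilon:=\int_{(0,\infty)^n}\varphi(t)\prod_j t_j^{1/p+\varepsilon-1}\,dt$ as $\min_j x_j\to\infty$. Bounding $\H_\varphi f_\varepsilon$ from below by replacing $I_\varepsilon$ with $A_\varepsilon$ minus its defect, and using Fatou's lemma in the form $\liminf_{\varepsilon\to0}A_\varepsilon\ge A_p$, I would conclude $\|\H_\varphi\|_{L^p(\R^n)\to L^p(\R^n)}\ge A_p$.

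The main obstacle is the limiting argument that converts the pointwise convergence $I_\varepsilon(\mathbf{x})\to A_\varepsilon$ into a clean bound on the ratio $\|\H_\varphi f_\varepsilon\|_p/\|f_\varepsilon\|_p$: one must control the contribution of the defect $A_\varepsilon-I_\varepsilon(\mathbf{x})$, which is small only for $\mathbf{x}$ far from the coordinate axes, against the mass of $\prod_j x_j^{-1-p\varepsilon}$, which concentrates at infinity precisely as $\varepsilon\to0$. I would handle this by truncating the $\mathbf{x}$-integral to $\{x_j\ge R\}$, sending $\varepsilon\to0$ first (so that the truncation costs only a factor $R^{-np\varepsilon}\to1$ while $A_\varepsilon\to A_p$) and only afterwards $R\to\infty$ (so that the supremum of the defect over $\{x_j\ge R\}$ tends to $0$).

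A conceptually cleaner route that I would keep in reserve is the logarithmic substitution $x_j=e^{s_j}$ on each of the $2^n$ sign-orthants, under which the map $Uf(s)=f(e^{s_1},\ldots,e^{s_n})\,e^{(s_1+\cdots+s_n)/p}$ is an $L^p$-isometry conjugating $\H_\varphi$, restricted to the positive orthant, to convolution on $\R^n$ by the nonnegative kernel $K(u)=\varphi(e^{u_1},\ldots,e^{u_n})\,e^{(u_1+\cdots+u_n)/p}$ with $\|K\|_{L^1(\R^n)}=A_p$. The desired lower bound is then the familiar fact that convolution by a nonnegative $L^1$ kernel has $L^p$-operator norm exactly $\|K\|_1$, proved with slowly varying test functions (the images of the $f_\varepsilon$ above). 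The endpoints require only minor adjustments: at $p=\infty$ the constant $\mathbf{1}_{(0,\infty)^n}$ already realizes the norm exactly, and at $p=1$ an approximate identity does.
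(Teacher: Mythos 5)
Your proposal is correct and follows essentially the same route as the paper: Minkowski's integral inequality plus the scaling identity for the upper bound, and the tensor-power test functions $f_\varepsilon(\mathbf{x})=\prod_j|x_j|^{-1/p-\varepsilon}\chi_{\{|x_j|\ge1\}}$ with a truncation-and-limit argument for the matching lower bound (the paper simply couples your truncation parameter $R$ to $1/\varepsilon$ and takes a single limit, picking up the harmless factor $\varepsilon^{n\varepsilon}\to1$). The logarithmic-substitution reduction to convolution that you keep in reserve is not used in the paper but would work equally well.
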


Throughout the whole article, {\sl we always assume that $\varphi$ is a nonnegative function in $L^1_{\rm loc}((0,\infty)^n)$} and denote by $C$ a positive constant which is independent of the main parameters, but it may vary from line to line. The symbol $A\lesssim B$ means that $A\leq C B$. If $A\lesssim B$ and $B\lesssim A$, then we write $A\sim B$.


\section{Norm of $\H_\varphi$  on $L^p(\R^n)$}

The main purpose of this section is to give the proof of Theorem \ref{sharp constants for Lp}. Let us first consider the operator  $\H^*_\varphi$ defined by
$$\H^*_\varphi f(x_1,\ldots,x_n)=\pint f(t_1 x_1,\ldots,t_n x_n) \pvarphi \pdt.$$
Studying this operator on the spaces $L^p(\R^n)$ is useful in proving the main theorem (Theorem \ref{main theorem}) in the next section.

Remark that $\H^*_\varphi= \H_{\overline{\varphi}}$ with $\overline{\varphi}({\bf t})=\frac{\varphi(1/t_1,\ldots,1/t_n)}{t_1\ldots t_n}$ for all ${\bf t}=(t_1,\ldots,t_n)\in (0,\infty)^n$. Hence, by Theorems \ref{main theorem} and \ref{sharp constants for Lp}, we obtain:

\begin{theorem}\label{adjoint operator, hardy}
	$\H^*_\varphi$ is bounded on $H^1(\R\times\cdots\times\R)$ if and only if
	\begin{equation}\label{an inequality for adjoint operator, hardy}
	\int_0^\infty\cdots \int_0^\infty \frac{\varphi(t_1,\ldots,t_n)}{t_1\ldots t_n}dt_1\cdots dt_n<\infty.
	\end{equation}
	Moreover, in that case,	
	$$\|\H^*_\varphi\|_{H^1(\R\times\cdots\times\R)\to H^1(\R\times\cdots\times\R)}= \int_0^\infty\cdots \int_0^\infty \frac{\varphi(t_1,\ldots,t_n)}{t_1\ldots t_n}dt_1\cdots dt_n.$$
\end{theorem}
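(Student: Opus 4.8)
The plan is to obtain this statement as a direct consequence of the main theorem (Theorem~\ref{main theorem}), using the dilation identity $\H^*_\varphi=\H_{\ov\varphi}$ recorded above, where $\ov\varphi({\bf t})=\varphi(1/t_1,\ldots,1/t_n)/(t_1\ldots t_n)$. First I would verify this identity rigorously. Inserting the formula for $\ov\varphi$ into the definition of $\H_{\ov\varphi}$ produces the kernel $\varphi(1/t_1,\ldots,1/t_n)/(t_1^2\ldots t_n^2)$ acting on $f(x_1/t_1,\ldots,x_n/t_n)$; the change of variables $s_i=1/t_i$, a smooth diffeomorphism of $(0,\infty)$ onto itself with $dt_i=ds_i/s_i^2$, then converts $f(x_i/t_i)$ into $f(x_i s_i)$ and cancels every Jacobian factor, yielding precisely $\H^*_\varphi f$. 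This is a short computation, but it is the hinge on which the whole argument turns.

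Next I would confirm that $\ov\varphi$ is an admissible kernel for Theorem~\ref{main theorem}. It is nonnegative since $\varphi$ is, and it belongs to $L^1_{\rm loc}((0,\infty)^n)$ because the inversion ${\bf t}\mapsto(1/t_1,\ldots,1/t_n)$ carries compact subsets of $(0,\infty)^n$ to compact subsets and has locally bounded Jacobian, so local integrability passes from $\varphi$ to $\ov\varphi$. Theorem~\ref{main theorem} then applies verbatim to $\ov\varphi$: since $\H_{\ov\varphi}=\H^*_\varphi$, the operator $\H^*_\varphi$ is bounded on $\phardy$ if and only if $\pint\ov\varphi(t_1,\ldots,t_n)\pdt<\infty$, and in that case its norm on $\phardy$ equals this integral.

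Finally I would rewrite that integral in the advertised form. The same substitution $s_i=1/t_i$ gives $\pint\ov\varphi(t_1,\ldots,t_n)\pdt=\pint \varphi(s_1,\ldots,s_n)/(s_1\ldots s_n)\,ds_1\cdots ds_n$, which is exactly the integral in \eqref{an inequality for adjoint operator, hardy}. Since $\varphi\ge 0$, Tonelli's theorem makes this change of variables legitimate regardless of finiteness, so the equality holds as an identity in $[0,\infty]$. Hence both the finiteness criterion for boundedness and the value of the operator norm coincide with the claimed expressions, completing the proof.

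I do not anticipate a genuine obstacle, since all the analytic substance is already contained in Theorem~\ref{main theorem}; the present result is its image under the inversion $t\mapsto 1/t$. The only matter requiring care is the bookkeeping of Jacobian factors in the two changes of variables---one for the operator identity, one for the integral---and checking that nonnegativity and local integrability indeed survive the inversion, so that the hypotheses of Theorem~\ref{main theorem} are honestly satisfied.
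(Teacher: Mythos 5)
Your proposal is correct and is essentially the paper's own argument: the paper derives this theorem directly from the remark that $\H^*_\varphi=\H_{\ov\varphi}$ with $\ov\varphi({\bf t})=\varphi(1/t_1,\ldots,1/t_n)/(t_1\ldots t_n)$ together with Theorem \ref{main theorem}, exactly as you do. Your write-up merely supplies the routine verifications (the change of variables, nonnegativity and local integrability of $\ov\varphi$, and Tonelli for the integral identity) that the paper leaves implicit.
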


\begin{theorem}\label{adjoint operator, sharp constants for Lp}
	Let $p\in [1,\infty]$. Then $\H^*_\varphi$ is bounded on $L^p(\R^n)$ if and only if
	\begin{equation}\label{an inequality for adjoint operator, Lp}
	\int_0^\infty\cdots \int_0^\infty \frac{\varphi(t_1,\ldots,t_n)}{t_1^{1/p}\ldots t_n^{1/p}}dt_1\cdots dt_n<\infty.
	\end{equation}
	Moreover, in that case,
	$$
	\|\H^*_\varphi\|_{L^p(\R^n)\to L^p(\R^n)}=\int_0^\infty\cdots \int_0^\infty \frac{\varphi(t_1,\ldots,t_n)}{t_1^{1/p}\ldots t_n^{1/p}}dt_1\cdots dt_n.
	$$
\end{theorem}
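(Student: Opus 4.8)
The plan is to deduce this theorem directly from Theorem~\ref{sharp constants for Lp} together with the identity $\H^*_\varphi=\H_{\ov\varphi}$ recorded just above it, where $\ov\varphi({\bf t})=\frac{\varphi(1/t_1,\ldots,1/t_n)}{\pt}$. Since $\varphi$ is nonnegative and locally integrable on $(0,\infty)^n$, the same is true of $\ov\varphi$ (on a compact subset $[a,b]^n$ the point $1/{\bf t}$ ranges over another compact subset while $\pt$ stays bounded away from $0$), so Theorem~\ref{sharp constants for Lp} applies to $\H_{\ov\varphi}$. It tells us that $\H^*_\varphi=\H_{\ov\varphi}$ is bounded on $L^p(\R^n)$ if and only if
$$\pint \frac{\ov\varphi(t_1,\ldots,t_n)}{t_1^{1-1/p}\ldots t_n^{1-1/p}}\pdt<\infty,$$
and that in that case its operator norm equals this integral. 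Everything then reduces to rewriting this integral.

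First I would substitute the definition of $\ov\varphi$, which multiplies the weight $\frac{1}{t_1^{1-1/p}\cdots t_n^{1-1/p}}$ by the extra factor $\frac{1}{\pt}$ and produces
$$\pint \frac{\varphi(1/t_1,\ldots,1/t_n)}{t_1^{2-1/p}\ldots t_n^{2-1/p}}\pdt.$$
Next I would perform the coordinatewise change of variables $s_i=1/t_i$, a $C^1$-diffeomorphism of $(0,\infty)^n$ with $dt_i=s_i^{-2}\,ds_i$ and $t_i^{\,2-1/p}=s_i^{-(2-1/p)}$. The per-coordinate factor $\frac{dt_i}{t_i^{2-1/p}}$ becomes $s_i^{2-1/p}s_i^{-2}\,ds_i=\frac{ds_i}{s_i^{1/p}}$, so the whole integral collapses to
$$\pint \frac{\varphi(s_1,\ldots,s_n)}{s_1^{1/p}\ldots s_n^{1/p}}\,ds_1\cdots ds_n,$$
which is exactly the integral appearing in \eqref{an inequality for adjoint operator, Lp}. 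Because $\varphi\ge 0$, Tonelli's theorem guarantees that these iterated integrals agree with a single integral over $(0,\infty)^n$ whether or not they are finite, so both the finiteness criterion and the numerical value are transported faithfully through the substitution.

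Since no hypothesis is lost along the way, this simultaneously yields the boundedness criterion and the exact norm for every $p\in[1,\infty)$; the endpoint $p=\infty$ is handled identically, reading $1/p$ as $0$ throughout, so that the weight $t_i^{1-1/p}$ becomes $t_i$ and the substitution produces the unweighted integral $\pint\varphi\,\pdt$. There is essentially no serious obstacle here: the statement is a corollary of Theorem~\ref{sharp constants for Lp}, and the only points deserving care are the bookkeeping of exponents under $s_i=1/t_i$ and the (harmless, by nonnegativity) appeal to Tonelli that legitimizes the change of variables even when the integral is infinite.
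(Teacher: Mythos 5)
Your proposal is correct and follows exactly the route the paper intends: the paper derives this theorem from the identity $\H^*_\varphi=\H_{\ov\varphi}$ with $\ov\varphi({\bf t})=\varphi(1/t_1,\ldots,1/t_n)/(t_1\cdots t_n)$ together with Theorem~\ref{sharp constants for Lp}, leaving the change of variables $s_i=1/t_i$ implicit. Your exponent bookkeeping, the check that $\ov\varphi\in L^1_{\rm loc}((0,\infty)^n)$, and the appeal to Tonelli are all accurate and simply make explicit what the paper omits.
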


By Theorems \ref{sharp constants for Lp}, \ref{adjoint operator, sharp constants for Lp} and the Fubini theorem, $\H^*_\varphi$ can be viewed
as the Banach space adjoint of $\H_\varphi$ and vice versa. More precisely, we have:

\begin{theorem}\label{adjoint operators on Lp}
	Let $p\in [1,\infty]$ and $1/p'+1/p=1$. 
	\begin{enumerate}[\rm (i)]
		\item If (\ref{main inequality 1}) holds, then, for all $f\in L^p(\R^n)$ and all $g\in L^{p'}(\R^n)$,
		$$\int_{\R^n} \H_\varphi f({\bf x}) g({\bf x})d{\bf x}=\int_{\R^n} f({\bf x})\H^*_\varphi g({\bf x}) d{\bf x}.$$
		\item If (\ref{an inequality for adjoint operator, Lp}) holds, then, for all $f\in L^p(\R^n)$ and all $g\in L^{p'}(\R^n)$,
		$$\int_{\R^n} \H^*_\varphi f({\bf x}) g({\bf x})d{\bf x}=\int_{\R^n} f({\bf x})\H_\varphi g({\bf x}) d{\bf x}.$$
	\end{enumerate}
\end{theorem}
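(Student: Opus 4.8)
The plan is to derive both identities directly from the Fubini--Tonelli theorem together with the elementary dilation substitutions; the only genuine content is the verification of absolute convergence, and the two hypotheses \eqref{main inequality 1} and \eqref{an inequality for adjoint operator, Lp} are precisely what guarantee it. Throughout I use that $\varphi\geq 0$, so that Tonelli's theorem applies to the integrands with absolute values and the interchanges are licit as soon as the corresponding triple integral is finite.

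For part (i) I would start from the definition of $\H_\varphi$ and write
$$\int_{\R^n}\H_\varphi f({\bf x})\,g({\bf x})\,d{\bf x}=\int_{\R^n}\left(\pint f\Big(\frac{x_1}{t_1},\ldots,\frac{x_n}{t_n}\Big)\frac{\pvarphi}{\pt}\,\pdt\right)g({\bf x})\,d{\bf x},$$
interchange the two integrations, and for each fixed ${\bf t}$ substitute $y_j=x_j/t_j$, so that $d{\bf x}=\pt\,d{\bf y}$ and the Jacobian $\pt$ cancels the kernel factor $1/\pt$. This leaves $\pint\varphi({\bf t})\big(\int_{\R^n}f({\bf y})\,g(t_1y_1,\ldots,t_ny_n)\,d{\bf y}\big)\pdt$, and interchanging once more, while recognising the inner integral over ${\bf t}$ as $\H^*_\varphi g({\bf y})$, produces the right-hand side of (i). To justify the interchanges it suffices, by Tonelli, to bound
$$\pint\varphi({\bf t})\left(\int_{\R^n}|f({\bf y})|\,|g(t_1y_1,\ldots,t_ny_n)|\,d{\bf y}\right)\pdt.$$
By H\"older's inequality the inner integral is at most $\|f\|_{L^p(\R^n)}\,\|g(t_1\cdot,\ldots,t_n\cdot)\|_{L^{p'}(\R^n)}$, and the dilation scaling of the $L^{p'}$-norm gives $\|g(t_1\cdot,\ldots,t_n\cdot)\|_{L^{p'}(\R^n)}=(\pt)^{-1/p'}\|g\|_{L^{p'}(\R^n)}$. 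Since $1/p'=1-1/p$, the displayed quantity is controlled by $\|f\|_{L^p(\R^n)}\|g\|_{L^{p'}(\R^n)}\pint\varphi({\bf t})(\pt)^{-(1-1/p)}\pdt$, which is finite exactly by \eqref{main inequality 1}; the endpoint cases $p=1,\infty$ are covered by reading $1/p'$ as $0$ or $1$.

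For part (ii) I would either repeat the same computation with the substitution $y_j=t_jx_j$ in place of $y_j=x_j/t_j$, in which case the scaling factor that appears is $(\pt)^{-1/p}$ and absolute convergence is supplied by \eqref{an inequality for adjoint operator, Lp}, or, more economically, deduce (ii) from (i) via the identity $\H^*_\varphi=\H_{\overline\varphi}$ recorded before Theorem \ref{adjoint operator, hardy}, where $\overline\varphi({\bf t})=\varphi(1/t_1,\ldots,1/t_n)/\pt$. Indeed, applying (i) with kernel $\overline\varphi$ gives $\la\H_{\overline\varphi}f,g\ra=\la f,\H^*_{\overline\varphi}g\ra$, and since the involution $\varphi\mapsto\overline\varphi$ satisfies $\overline{\overline\varphi}=\varphi$ the right-hand side equals $\la f,\H_\varphi g\ra$; a change of variables $s_j=1/t_j$ shows that hypothesis \eqref{main inequality 1} written for $\overline\varphi$ coincides with \eqref{an inequality for adjoint operator, Lp} for $\varphi$, so (i) is legitimately applicable. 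I do not expect any real obstacle here: the argument is pure bookkeeping, and the only point demanding care is tracking the dilation constants so that the Tonelli bound matches the stated hypotheses uniformly in $p$, including the endpoints $p=1$ and $p=\infty$.
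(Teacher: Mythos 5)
Your proof is correct and follows essentially the same route as the paper, which states this theorem without a detailed argument, justifying it only by citing Theorems \ref{sharp constants for Lp}, \ref{adjoint operator, sharp constants for Lp} and the Fubini theorem. Your H\"older-plus-dilation estimate is precisely the absolute-convergence verification that this citation tacitly relies on, and your reduction of (ii) to (i) via the involution $\varphi\mapsto\overline\varphi$ matches the identity $\H^*_\varphi=\H_{\overline\varphi}$ recorded in the paper just before Theorem \ref{adjoint operator, hardy}.
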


As a consequence of the above theorem, we get the following.

\begin{corollary}\label{relation to the Fourier transform}
	Let $p\in [1,2]$. 
	\begin{enumerate}[\rm (i)]
		\item If (\ref{main inequality 1}) holds, then, for all $f\in L^p(\R^n)$,
		$$\widehat{\H_\varphi f}= \H_\varphi^*\hat{f}.$$
		\item If (\ref{an inequality for adjoint operator, Lp}) holds, then, for all $f\in L^p(\R^n)$,
		$$\widehat{\H^*_\varphi f}= \H_\varphi\hat{f}.$$
	\end{enumerate}
\end{corollary}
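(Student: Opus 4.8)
The plan is to read both identities as statements of \emph{dilation covariance} of the Fourier transform, turning each Hausdorff operator into an average of $L^p$-dilations and then commuting $\F$ past that average. For ${\bf t}\in(0,\infty)^n$ introduce the two dilation operators $\delta_{\bf t}f({\bf x})=f(x_1/t_1,\ldots,x_n/t_n)$ and $\delta^{\bf t}f({\bf x})=f(t_1x_1,\ldots,t_nx_n)$, so that $\H_\varphi f=\int_{(0,\infty)^n}\delta_{\bf t}f\,\frac{\varphi({\bf t})}{t_1\cdots t_n}\,d{\bf t}$ and $\H^*_\varphi f=\int_{(0,\infty)^n}\delta^{\bf t}f\,\varphi({\bf t})\,d{\bf t}$. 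Before anything else I would record that for $p\in[1,2]$ the Fourier transform $\F$ is a bounded operator from $L^p(\R^n)$ into $L^{p'}(\R^n)$ by the Hausdorff--Young inequality, and that the two hypotheses line up with boundedness of the companion operators: since $1-1/p=1/p'$, condition (\ref{main inequality 1}) is precisely the condition of Theorem \ref{adjoint operator, sharp constants for Lp} guaranteeing that $\H^*_\varphi$ is bounded on $L^{p'}$ (while Theorem \ref{sharp constants for Lp} gives $\H_\varphi$ bounded on $L^p$), and symmetrically (\ref{an inequality for adjoint operator, Lp}) gives both $\H^*_\varphi$ bounded on $L^p$ and $\H_\varphi$ bounded on $L^{p'}$. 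Thus in each case all four objects appearing live in the correct spaces.

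For part (i) I would argue as follows. The norm identity $\|\delta_{\bf t}f\|_{L^p}=(t_1\cdots t_n)^{1/p}\|f\|_{L^p}$ gives
$$\int_{(0,\infty)^n}\|\delta_{\bf t}f\|_{L^p}\,\frac{\varphi({\bf t})}{t_1\cdots t_n}\,d{\bf t}=\|f\|_{L^p}\int_{(0,\infty)^n}\frac{\varphi({\bf t})}{(t_1\cdots t_n)^{1-1/p}}\,d{\bf t}<\infty$$
by (\ref{main inequality 1}), so $\H_\varphi f$ is represented by a Bochner integral in $L^p(\R^n)$. Since $\F\colon L^p\to L^{p'}$ is bounded and linear, it commutes with the Bochner integral, whence $\F(\H_\varphi f)=\int_{(0,\infty)^n}\F(\delta_{\bf t}f)\,\frac{\varphi({\bf t})}{t_1\cdots t_n}\,d{\bf t}$. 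Inserting the elementary dilation rule $\F(\delta_{\bf t}f)(\boldsymbol\xi)=(t_1\cdots t_n)\,\hat f(t_1\xi_1,\ldots,t_n\xi_n)=(t_1\cdots t_n)\,\delta^{\bf t}\hat f(\boldsymbol\xi)$ cancels the factor $t_1\cdots t_n$ and yields $\int_{(0,\infty)^n}\delta^{\bf t}\hat f\,\varphi({\bf t})\,d{\bf t}=\H^*_\varphi\hat f$, which is (i). Part (ii) is entirely parallel: under (\ref{an inequality for adjoint operator, Lp}) the estimate $\|\delta^{\bf t}f\|_{L^p}=(t_1\cdots t_n)^{-1/p}\|f\|_{L^p}$ makes $\H^*_\varphi f$ a Bochner integral in $L^p$, and the dual dilation rule $\F(\delta^{\bf t}f)=(t_1\cdots t_n)^{-1}\delta_{\bf t}\hat f$ converts $\F(\H^*_\varphi f)$ into $\int_{(0,\infty)^n}\delta_{\bf t}\hat f\,\frac{\varphi({\bf t})}{t_1\cdots t_n}\,d{\bf t}=\H_\varphi\hat f$.

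The step I expect to require the most care is the justification that the \emph{pointwise} operator $\H_\varphi$ coincides with the Bochner integral and that $\F$ may be pulled inside it; this is exactly where one must avoid a naive Fubini argument, because integrating $|f({\bf x}/{\bf t})|$ fully in ${\bf x}$ forces the factor $t_1\cdots t_n$ and hence the too-strong condition $\int\varphi\,d{\bf t}<\infty$, which corresponds only to $p=1$. The vector-valued formulation is what keeps the correct weight $(t_1\cdots t_n)^{1/p}$. To close this point I would (a) note strong measurability of ${\bf t}\mapsto\delta_{\bf t}f$, which follows from strong continuity of dilations on $L^p$; (b) identify the Bochner integral $I:=\int\delta_{\bf t}f\,\frac{\varphi}{t_1\cdots t_n}\,d{\bf t}$ with $\H_\varphi f$ by testing against $h\in L^{p'}$, using the scalar Fubini--Tonelli theorem whose absolute convergence $\int_{(0,\infty)^n}\|\delta_{\bf t}f\|_{L^p}\|h\|_{L^{p'}}\frac{\varphi}{t_1\cdots t_n}\,d{\bf t}<\infty$ is again exactly (\ref{main inequality 1}) after H\"older's inequality; and (c) invoke the standard fact that a bounded linear operator commutes with a Bochner integral. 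One may alternatively deduce the two identities from Theorem \ref{adjoint operators on Lp} via the multiplication formula $\int\hat u\,v=\int u\,\hat v$, but the vector-valued route above treats every $f\in L^p(\R^n)$ uniformly and makes the role of the hypotheses most transparent.
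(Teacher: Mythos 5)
Your argument is correct, and it takes a genuinely different route from the paper. The paper first reduces to the case $p=1$ via the Hausdorff--Young theorem and the density of $L^1(\R^n)\cap L^p(\R^n)$ in $L^p(\R^n)$, and then, for $f\in L^1(\R^n)$, obtains the identity pointwise by applying the duality formula of Theorem \ref{adjoint operators on Lp}(i) with $g({\bf x})=e^{-2\pi i{\bf x}\cdot{\bf y}}\in L^\infty(\R^n)$ together with the Fubini theorem. You instead realize $\H_\varphi f$ as an $L^p$-valued Bochner integral of dilations, verify that the integrability hypothesis (\ref{main inequality 1}) is exactly the Bochner condition $\int\|\delta_{\bf t}f\|_{L^p}\frac{\varphi({\bf t})}{t_1\cdots t_n}\,d{\bf t}<\infty$, and then commute the bounded operator $\F\colon L^p\to L^{p'}$ past the integral, finishing with the elementary dilation rule for $\F$. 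Each approach has its merits: the paper's is shorter once Theorem \ref{adjoint operators on Lp} is in hand, whereas yours treats all $p\in[1,2]$ in one stroke, makes the role of the weight $(t_1\cdots t_n)^{1-1/p}$ conceptually transparent, and sidesteps the limiting step entirely --- a genuine advantage, since in the paper's route the $p=1$ computation for $f\in L^1\cap L^p$ rests on a Fubini argument whose absolute convergence uses $\int\varphi\,d{\bf t}<\infty$ rather than the weaker hypothesis (\ref{main inequality 1}) assumed for general $p$, so strictly speaking an extra truncation of $\varphi$ is needed there. Your points (a)--(c) (strong measurability via strong continuity of dilations, identification of the Bochner integral with the pointwise operator by testing against $L^{p'}$, and commuting a bounded operator with a Bochner integral) are exactly the right things to check and are all standard.
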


\begin{proof}
	We prove only (i) since the proof of (ii) is similar. Moreover, from the Hausdorff-Young theorem and the fact that $L^1(\R^n)\cap L^p(\R^n)$ is dense in $L^p(\R^n)$, we consider only the case $p=1$. For all ${\bf y}=(y_1,\ldots,y_n)\in\R^n$, by Theorem \ref{adjoint operators on Lp}(i) and the Fubini theorem, we get 
	\begin{eqnarray*}
		\widehat{\H_\varphi f}({\bf y}) &=& \int_{\R^n} \H_\varphi f({\bf x}) e^{-2\pi i {\bf x}\cdot{\bf y}}d{\bf x}\\
		&=& \int_{\R^n} f({\bf x}) d{\bf x} \pint e^{-2\pi i\sum_{j=1}^n t_j x_j y_j}\pvarphi \pdt\\
		&=& \pint \hat{f}(t_1 y_1,\ldots,t_n y_n) \pvarphi\pdt\\
		&=& \H^*_\varphi \hat f ({\bf y}).
	\end{eqnarray*}
	This completes the proof of Corollary \ref{relation to the Fourier transform}.
	
\end{proof}


\begin{proof}[\bf Proof of Theorem \ref{sharp constants for Lp}]
	Since the case $p=\infty$ is trivial, we consider only the case $p\in [1,\infty)$. Suppose that (\ref{main inequality 1}) holds. For any $f\in L^p(\R^n)$, by the Minkowski inequality, we obtain
	\begin{eqnarray*}
		\|\H_\varphi f\|_{L^p(\R^n)} &\leq& \int_0^\infty\cdots \int_0^\infty \left\|f\left(\frac{\cdot}{t_1},\ldots,\frac{\cdot}{t_n}\right)\right\|_{L^p(\R^n)} \frac{\varphi(t_1,\ldots,t_n)}{t_1\ldots t_n} dt_1\cdots dt_n\\
		&=& \|f\|_{L^p(\R^n)} \int_0^\infty\cdots \int_0^\infty \frac{\varphi(t_1,\ldots,t_n)}{t_1^{1-1/p}\ldots t_n^{1-1/p}}dt_1\cdots dt_n.
	\end{eqnarray*}
	This proves that $\H_\varphi$ is bounded on $L^p(\R^n)$, moreover,
	\begin{equation}\label{sharp constants for Lp, 1}
	\|\H_\varphi\|_{L^p(\R^n)\to L^p(\R^n)}\leq \int_0^\infty\cdots \int_0^\infty \frac{\varphi(t_1,\ldots,t_n)}{t_1^{1-1/p}\ldots t_n^{1-1/p}}dt_1\cdots dt_n.	
	\end{equation}

	Conversely, suppose that $\H_\varphi$ is bounded on $L^p(\R^n)$. For any $\varepsilon>0$, take 
	$$f_\varepsilon({\bf x})= \prod_{j=1}^n |x_j|^{-1/p-\varepsilon}\chi_{\{y_j\in\R: |y_j|\geq 1\}}(x_j)$$
	for all ${\bf x}=(x_1,\ldots,x_n)\in\R^n.$ Then, it is easy to see that $f_\varepsilon\in L^p(\R^n)$ and
	$$\H_\varphi f_\varepsilon({\bf x})= \prod_{j=1}^n |x_j|^{-1/p-\varepsilon}\int_{0}^{|x_1|}dt_1\cdots \int_{0}^{|x_{n-1}|}dt_{n-1}\int_{0}^{|x_n|}\frac{\varphi(t_1,\ldots,t_n)}{t_1^{1-1/p-\varepsilon}\ldots t_n^{1-1/p-\varepsilon}}dt_n$$
	for all ${\bf x}=(x_1,\ldots,x_n)\in\R^n.$ Some simple computations give
	\begin{eqnarray*}
		\|\H_\varphi f_\varepsilon\|_{L^p(\R^n)} &\geq& \int_{0}^{1/\varepsilon}\cdots \int_{0}^{1/\varepsilon} \frac{\varphi(t_1,\ldots,t_n)}{t_1^{1-1/p-\varepsilon}\ldots t_n^{1-1/p-\varepsilon}}dt_1\cdots dt_n \times\\
		&&\times\left(\prod_{j=1}^n\int_{\{x_j\in\R:|x_j|\geq 1/\varepsilon\}}   |x_j|^{-1-p\varepsilon} dx_j\right)^{1/p}\\
		&=& \int_{0}^{1/\varepsilon}\cdots \int_{0}^{1/\varepsilon} \frac{\varphi(t_1,\ldots,t_n)}{t_1^{1-1/p-\varepsilon}\ldots t_n^{1-1/p-\varepsilon}}dt_1\cdots dt_n \left(\varepsilon^{n\varepsilon}\|f_\varepsilon\|_{L^p(\R^n)}\right).
	\end{eqnarray*}
	Therefore,
	\begin{eqnarray*}
		\|\H_\varphi\|_{L^p(\R^n)\to L^p(\R^n)} &\geq& \frac{\|\H_\varphi f_\varepsilon\|_{L^p(\R^n)}}{\|f_\varepsilon\|_{L^p(\R^n)}}\\
		&\geq& \varepsilon^{n\varepsilon} \int_{0}^{1/\varepsilon}\cdots \int_{0}^{1/\varepsilon} \frac{\varphi(t_1,\ldots,t_n)}{t_1^{1-1/p-\varepsilon}\ldots t_n^{1-1/p-\varepsilon}}dt_1\cdots dt_n.
	\end{eqnarray*}
	Letting $\varepsilon\to 0$, we obtain
	$$\|\H_\varphi\|_{L^p(\R^n)\to L^p(\R^n)}\geq \int_{0}^{\infty}\cdots \int_{0}^{\infty} \frac{\varphi(t_1,\ldots,t_n)}{t_1^{1-1/p}\ldots t_n^{1-1/p}}dt_1\cdots dt_n.$$
	This, together (\ref{sharp constants for Lp, 1}), implies that
	$$\|\H_\varphi\|_{L^p(\R^n)\to L^p(\R^n)} = \int_{0}^{\infty}\cdots \int_{0}^{\infty} \frac{\varphi(t_1,\ldots,t_n)}{t_1^{1-1/p}\ldots t_n^{1-1/p}}dt_1\cdots dt_n,$$
	and thus ends the proof of Theorem \ref{sharp constants for Lp}.
	
\end{proof}


\section{Norm of $\H_\varphi$ on $\phardy$}

The main purpose of this section is to give the proof of Theorem \ref{main theorem} and to show that the norm of the Hausdorff operator $\H_\varphi$ in Theorem \ref{main theorem}  still holds when one replaces the norm $\|\cdot\|_{\phardy}$ by the norm $\|\cdot\|_*$ (see (\ref{norm via Hilbert transforms}) below).

Let $\mathbb C_+^n$ be the upper half-plan in $\mathbb C^n$, that is,
$$\mathbb C_+^n=\prod_{j=1}^n \{z_j=x_j+iy_j\in\mathbb C: y_j>0\}.$$
Following Gundy-Stein \cite{GS} and Lacey \cite{La}, a function $F: \mathbb C_+^n \to \mathbb C$ is said to be in the Hardy space $\mathcal H^1_a(\mathbb C_+^n)$ if it is holomorphic in each variable separately and
$$\|F\|_{\mathcal H^1_a(\mathbb C_+^n)}:= \sup_{(y_1,\ldots,y_n)\in (0,\infty)^n}\int_{-\infty}^{\infty}\cdots \int_{-\infty}^{\infty} |F(x_1+i y_1,\ldots,x_n+iy_n)|dx_1\ldots dx_n<\infty.$$

Let $j\in \{1,\ldots,n\}$. For any $f\in L^1(\R^n)$, the Hilbert transform  $H_j f$ computed in the $j^{\rm th}$ variable is defined by
$$H_j f({\bf x})=\frac{1}{\pi}{\rm p.v.} \int_{-\infty}^\infty \frac{f(x_1,\ldots, x_j-y,\ldots,x_n)}{y}dy.$$

For any ${\bf e}=(e_1,\ldots,e_n)\in \mathbb E:= \{0,1\}^n$, denote
$${\bf H}_{\bf e}=\prod_{j=1}^n H_{j}^{e_j}$$
with $H_{j}^{e_j}=I$ for $e_j=0$ while $H_{j}^{e_j}=H_j$ for $e_j=1$.

The following two theorems are well-known, see for example  \cite{GS, La, LPPW, We}.

\begin{theorem}\label{equivalent characterizations of H1}
A function $f$ is in $\phardy$ if and only if ${\bf H_e} f$ is in $L^1(\R^n)$ for all ${\bf e}\in \mathbb E$.	Moreover, in that case,
	$$\|f\|_{\phardy}\sim \sum_{{\bf e}\in \mathbb E} \|{\bf H_e} f\|_{L^1(\R^n)}.$$
\end{theorem}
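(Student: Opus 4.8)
The plan is to establish the two estimates
$\sum_{{\bf e}\in\mathbb E}\|{\bf H_e}f\|_{L^1(\R^n)}\lesssim\|f\|_{\phardy}$
and
$\|f\|_{\phardy}\lesssim\sum_{{\bf e}\in\mathbb E}\|{\bf H_e}f\|_{L^1(\R^n)}$
separately, reducing the second (harder) one to the holomorphic Hardy space $\mathcal H^1_a(\mathbb C_+^n)$ and the classical orthant decomposition of the Fourier transform.

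\textbf{The easy direction.} First I would note that each $H_j$ is a product Calder\'on--Zygmund operator acting in a single variable, so the composition ${\bf H_e}=\prod_{j=1}^n H_j^{e_j}$ is bounded on $\phardy$. Combining this with the elementary pointwise bound $|g|\le M_\Phi g$ a.e.\ (valid because $g*(\otimes_{j=1}^n\Phi^{(j)}_{t_j})\to g$ almost everywhere as $t_j\to 0$ for $g\in L^1(\R^n)$), which gives $\|g\|_{L^1(\R^n)}\le\|M_\Phi g\|_{L^1(\R^n)}=\|g\|_{\phardy}$, one obtains $\|{\bf H_e}f\|_{L^1(\R^n)}\le\|{\bf H_e}f\|_{\phardy}\lesssim\|f\|_{\phardy}$. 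Summing over the finitely many ${\bf e}\in\mathbb E$ closes this direction.

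\textbf{The hard direction via orthant decomposition.} For the converse I would pass to the holomorphic picture. For each sign vector $\sigma\in\{\pm1\}^n$ consider the frequency-orthant projection $P_\sigma=\prod_{j=1}^n\frac12(I+i\sigma_j H_j)$, which cuts $\hat f$ down to the orthant $\{\xi:\sigma_j\xi_j>0\}$. Expanding the product gives $P_\sigma f=2^{-n}\sum_{{\bf e}\in\mathbb E}i^{e_1+\cdots+e_n}\big(\prod_j\sigma_j^{e_j}\big){\bf H_e}f$, so each $P_\sigma f$ is a fixed constant-coefficient linear combination of the family $\{{\bf H_e}f\}_{{\bf e}\in\mathbb E}$; since the corresponding $2^n\times2^n$ matrix is a tensor power of $\begin{pmatrix}1&i\\1&-i\end{pmatrix}$ it is invertible, whence $\sum_\sigma\|P_\sigma f\|_{L^1(\R^n)}\sim\sum_{{\bf e}\in\mathbb E}\|{\bf H_e}f\|_{L^1(\R^n)}$. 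The hypothesis that every ${\bf H_e}f\in L^1(\R^n)$ thus says precisely that each $P_\sigma f$ is an $L^1$ function whose Poisson extension, after reflecting the variables with $\sigma_j=-1$, is holomorphic in the product of half-planes; that is, it yields an $F_\sigma\in\mathcal H^1_a(\mathbb C_+^n)$ with boundary values $P_\sigma f$ and $\|F_\sigma\|_{\mathcal H^1_a(\mathbb C_+^n)}=\|P_\sigma f\|_{L^1(\R^n)}$.

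\textbf{Maximal control and assembly.} The crucial estimate is that the smooth maximal function of an $\mathcal H^1_a(\mathbb C_+^n)$ function is controlled by its boundary $L^1$ norm, $\|M_\Phi F_\sigma\|_{L^1(\R^n)}\lesssim\|F_\sigma\|_{\mathcal H^1_a(\mathbb C_+^n)}$. Granting this, since $f=\sum_\sigma P_\sigma f$ as boundary values, the subadditivity of $M_\Phi$ gives
$$\|f\|_{\phardy}=\|M_\Phi f\|_{L^1(\R^n)}\le\sum_\sigma\|M_\Phi F_\sigma\|_{L^1(\R^n)}\lesssim\sum_\sigma\|F_\sigma\|_{\mathcal H^1_a(\mathbb C_+^n)}\sim\sum_{{\bf e}\in\mathbb E}\|{\bf H_e}f\|_{L^1(\R^n)},$$
which finishes the equivalence of norms. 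I expect this last maximal inequality to be the main obstacle: unlike the one-parameter case one cannot simply invoke subharmonicity of $|F|^{p}$ for a single small exponent, so one must either factor the holomorphic $H^1$ functions and use the iterated Gundy--Stein theory, or argue by induction on $n$, treating one variable at a time with values in the Hardy space of the remaining variables and invoking the vector-valued Fefferman--Stein maximal inequality. Everything else reduces to bookkeeping with the invertible change of basis between $\{P_\sigma\}$ and $\{{\bf H_e}\}$.
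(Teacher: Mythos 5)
The paper does not actually prove this statement: it is quoted as a well-known result with references to Gundy--Stein, Lacey, Lacey--Petermichl--Pipher--Wick and Weisz, so there is no in-paper argument to compare yours against line by line. Judged on its own, your outline is the standard route taken in those references and is sound in structure. The orthant projections $P_\sigma=\prod_j\tfrac12(I+i\sigma_jH_j)$ do sum to the identity, the change of basis between $\{P_\sigma f\}_{\sigma}$ and $\{{\bf H_e}f\}_{{\bf e}\in\mathbb E}$ is indeed governed by an invertible tensor-power matrix, and an $L^1$ function with spectrum in a closed orthant does extend (after the reflections you describe) to an element of $\mathcal H^1_a(\mathbb C_+^n)$ with equal norm. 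Your easy direction is also fine, granting that $H_j=H_j\otimes I\otimes\cdots\otimes I$ is bounded on $\phardy$; note, though, that this is itself a nontrivial input from product (Journ\'e-type) singular integral theory, not a one-line fact, and that defining the iterated principal values ${\bf H_e}f$ for a general $f\in L^1(\R^n)$ already requires some care (a point the paper also glosses over).

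The one place where your argument is genuinely incomplete is the step you yourself flag: the inequality $\|M_\Phi F_\sigma\|_{L^1(\R^n)}\lesssim\|F_\sigma\|_{\mathcal H^1_a(\mathbb C_+^n)}$. This is precisely the content of the paper's Theorem \ref{boundary value function} (the Gundy--Stein theorem), and it is where essentially all of the analytic depth of Theorem \ref{equivalent characterizations of H1} resides; in the multi-parameter setting it cannot be obtained by the one-variable subharmonicity trick, and your two suggested repairs (iterated Gundy--Stein theory, or induction on $n$ with the vector-valued Fefferman--Stein maximal inequality) are the correct ones but are only named, not executed. So your proposal should be read as a correct reduction of Theorem \ref{equivalent characterizations of H1} to Theorem \ref{boundary value function}, which is the same level of completeness as the paper itself, rather than as a self-contained proof.
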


\begin{theorem}\label{boundary value function}
	Let $F\in \mathcal H^1_a(\mathbb C_+^n)$. Then the boundary value function $f$ of $F$, which is defined by
	$$f(x_1,\ldots,x_n)=\lim_{(y_1,\ldots,y_n)\to (0,\ldots,0)} F(x_1+iy_1,\ldots,x_n+iy_n),$$
	a. e. ${\bf x}=(x_1,\ldots,x_n)\in\R^n$, is in $\phardy$. Moreover,
	$$\|f\|_{\phardy}\sim \|f\|_{L^1(\R^n)}=\|F\|_{\mathcal H^1_a(\mathbb C_+^n)}$$
	and, for all ${\bf x}+i{\bf y}=(x_1+iy_1,\ldots,x_n+iy_n)\in \mathbb C_+^n$,
	\begin{eqnarray*}
	F({\bf x}+i{\bf y})	&=& \int_{\R}\cdots\int_{\R} f(x_1-u_1,\ldots,x_n-u_n)\prod_{j=1}^n \frac{1}{y_j}P\left(\frac{u_j}{y_j}\right) du_1\ldots du_n\\
	&=:& f*(\otimes_{j=1}^n P_{y_j})({\bf x}),
	\end{eqnarray*}
	where $P(u)= \frac{1}{1+u^2}, u\in\R$, is the Poisson kernel on $\R$.	
\end{theorem}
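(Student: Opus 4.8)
The plan is to bootstrap from the classical one-parameter Hardy space theory on the upper half-plane $\mathbb C_+$, applying it one variable at a time, and then to invoke Theorem \ref{equivalent characterizations of H1} to recognize the boundary function as an element of $\phardy$. First I would establish an \emph{interior} Poisson reproduction formula: for every $F\in\mathcal H^1_a(\mathbb C_+^n)$ and all heights $\boldsymbol\delta,\mathbf y\in(0,\infty)^n$,
$$F(\mathbf x+i(\mathbf y+\boldsymbol\delta))= F(\cdot+i\boldsymbol\delta)*\Big(\otimes_{j=1}^n P_{y_j}\Big)(\mathbf x).$$
This is obtained by peeling off one variable at a time. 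Freezing $z_2,\dots,z_n$ at the heights $\delta_2,\dots,\delta_n$, the slice $z_1\mapsto F(z_1,\,\cdot\,)$ is holomorphic on $\mathbb C_+$; using that $|F|$ is subharmonic in $z_1$ together with the uniform bound defining the $\mathcal H^1_a$-norm, a Fubini argument shows that this slice belongs to the one-parameter half-plane Hardy space for almost every choice of the frozen variables, and the classical one-variable reproduction replaces $z_1$ by a convolution with $P_{y_1}$ in the first coordinate. Iterating in $z_2,\dots,z_n$ and using the product structure of the kernels yields the displayed identity.

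Next I would let $\boldsymbol\delta\to 0$. The family $\{F(\cdot+i\boldsymbol\delta)\}_{\boldsymbol\delta}$ is bounded in $L^1(\R^n)$ by $\|F\|_{\mathcal H^1_a(\mathbb C_+^n)}$, and applying the one-parameter boundary-value theory in each variable (which gives a.e. and $L^1$ convergence of each slice) one shows that $F(\cdot+i\boldsymbol\delta)$ converges both a.e. and in $L^1(\R^n)$ to a function $f$, that $f$ is exactly the boundary value in the statement, and that $\|f\|_{L^1(\R^n)}=\sup_{\boldsymbol\delta}\|F(\cdot+i\boldsymbol\delta)\|_{L^1(\R^n)}=\|F\|_{\mathcal H^1_a(\mathbb C_+^n)}$. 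Passing to the limit $\boldsymbol\delta\to0$ in the interior formula, with the kernels $\otimes_{j=1}^n P_{y_j}$ acting as an approximate identity, produces the Poisson representation $F(\mathbf x+i\mathbf y)=f*(\otimes_{j=1}^n P_{y_j})(\mathbf x)$.

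Finally, to show $f\in\phardy$ with $\|f\|_{\phardy}\sim\|f\|_{L^1(\R^n)}$, I would control the iterated Hilbert transforms ${\bf H_e}f$ for $\mathbf e\in\mathbb E$. Since the harmonic conjugate in each variable is encoded by the holomorphicity of $F$ in that variable, each ${\bf H_e}f$ is itself the boundary value of a function in $\mathcal H^1_a(\mathbb C_+^n)$ obtained from $F$ by the appropriate combination of rotations in the holomorphic variables; by the previous step each such boundary value lies in $L^1(\R^n)$ with $\|{\bf H_e}f\|_{L^1(\R^n)}\le\|F\|_{\mathcal H^1_a(\mathbb C_+^n)}$. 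Summing over $\mathbf e\in\mathbb E$ and invoking Theorem \ref{equivalent characterizations of H1} then gives $f\in\phardy$ together with $\|f\|_{\phardy}\sim\|f\|_{L^1(\R^n)}=\|F\|_{\mathcal H^1_a(\mathbb C_+^n)}$.

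The step I expect to be the main obstacle is the first one: transferring the uniform $L^1$-control from the full $n$-fold integral to the individual slices. The defining bound of $\mathcal H^1_a$ controls only $\int_{\R^n}|F(\mathbf x+i\mathbf y)|\,d\mathbf x$ uniformly in $\mathbf y$, whereas the one-variable theory requires $\sup_{y_1}\int_{\R}|F(x_1+iy_1,\,\cdot\,)|\,dx_1<\infty$ for almost every fixed value of the remaining variables. Reconciling these calls for the subharmonicity of $|F|$ in each variable and the monotonicity of the one-variable $H^1$-means, after which the remaining steps reduce to routine approximate-identity and Hilbert-transform computations.
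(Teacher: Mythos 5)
The paper does not actually prove this theorem: it is stated as a known result with pointers to \cite{GS, La, LPPW, We}, so there is no internal proof to compare against. Your outline follows the standard route taken in those references (interior Poisson reproduction, passage to the boundary, identification of ${\bf H_e}f$ via the holomorphy of $F$ in each variable, then Theorem \ref{equivalent characterizations of H1}), and the last two steps are essentially sound: once the Poisson/Cauchy representation is available, $\hat f$ is supported in the first octant, so $H_jf=-if$ in each variable, ${\bf H_e}f=(-i)^{|{\bf e}|}f$, and $f\in\phardy$ with $\|f\|_{\phardy}\sim\|f\|_{L^1(\R^n)}$ follows from Theorem \ref{equivalent characterizations of H1}.

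The gap is exactly where you place it, but the repair you propose does not close it. Fubini gives, for each \emph{fixed} $y_1$, that $\int_\R|F(x_1+iy_1,x'+i\delta')|\,dx_1<\infty$ for a.e.\ $x'$, with an exceptional null set depending on $y_1$; even on a countable dense set of heights this yields no uniform bound in $y_1$, so it does not place the slice in the one-variable Hardy space. And ``monotonicity of the one-variable $H^1$-means'' is circular here: for a holomorphic $g$ on $\mathbb C_+$ the means $y\mapsto\int_\R|g(x+iy)|\,dx$ are known to be decreasing only after one knows $g$ has uniformly bounded means (finiteness on a single horizontal line does not propagate upward for a general holomorphic function). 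Likewise, subharmonicity of $|F|$ in $z_1$ combined with the defining bound only controls the $x'$-\emph{integrated} means $\int_{\R^{n-1}}|F(z_1,x'+i\delta')|\,dx'$, not the individual slices. The standard way to make your first step work is different: first derive the pointwise bound $|F({\bf x}+i{\bf y})|\le C\,\|F\|_{\mathcal H^1_a(\mathbb C_+^n)}\prod_{j=1}^n y_j^{-1}$ by iterating the sub-mean-value inequality for $|F|$ over the polydisc $\prod_j D(z_j,y_j/2)$ against the full $n$-fold $L^1$ bound. This makes every slice $z_1\mapsto F(z_1,x'+i\delta')$ a \emph{bounded} holomorphic function on each half-plane $\{y_1\ge\epsilon\}$, for which Poisson reproduction from height $\epsilon$ is classical; iterating in the remaining variables then gives your interior formula for every (not merely almost every) choice of the frozen variables. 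With that substitution the rest of your argument goes through.
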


In order to prove Theorem \ref{main theorem}, we also need the following two lemmas.

\begin{lemma}\label{the necessary condition}
	Let $\varphi$ be such that $\H_\varphi$ is bounded from  $\phardy$ into $L^1(\R^n)$. Then (\ref{main inequality}) holds.
\end{lemma}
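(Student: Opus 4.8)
The plan is to establish the necessary condition \eqref{main inequality} by testing the operator $\H_\varphi$ on a single well-chosen function that lies in $\phardy$ and for which the action of $\H_\varphi$ can be computed more or less explicitly. The natural candidates are tensor products $f\otimes\cdots\otimes f$ with $f\in H^1(\R)$, since by Remark \ref{H1 implies Hn}(ii) these belong to $\phardy$ with controlled norm, and the multi-parameter Hausdorff operator interacts cleanly with the product structure: one checks directly from the definition that
\[
\H_\varphi(g_1\otimes\cdots\otimes g_n)(\mathbf x)=\pint \Big(\prod_{j=1}^n \tfrac{1}{t_j}g_j\big(\tfrac{x_j}{t_j}\big)\Big)\pvarphi\,\pdt,
\]
so the kernel $\varphi$ gets ``paired'' against dilates of the fixed profile. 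First I would fix a smooth, nonnegative, compactly supported bump $g$ on $\R$ with $\int_\R g=1$ (for instance taking $g=\Phi^{(1)}$), normalized so that $\|g\otimes\cdots\otimes g\|_{\phardy}\le C_2\|g\|_{H^1(\R)}^n<\infty$.

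The key idea is to exploit positivity. Because $\varphi\ge 0$ and $g\ge 0$, every quantity in sight is nonnegative, so there is no cancellation to control and I can freely apply Fatou/monotone convergence. I would evaluate $\H_\varphi(g\otimes\cdots\otimes g)$ against a smooth approximation of the point mass at the origin, or equivalently pair it with the maximal-function machinery, to extract a lower bound of the form $\|\H_\varphi(g\otimes\cdots\otimes g)\|_{L^1(\R^n)}\gtrsim \pint \pvarphi\,\pdt$, at least after integrating over the supports. The boundedness hypothesis (into $L^1(\R^n)$, which is weaker than into $\phardy$ and hence the correct hypothesis for a \emph{necessary} condition) then forces the right-hand integral to be finite. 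Concretely, using $\int_{\R^n}\H_\varphi(g\otimes\cdots\otimes g)(\mathbf x)\,d\mathbf x = \big(\int_\R g\big)^n \cdot \pint \pvarphi\,\pdt = \pint\pvarphi\,\pdt$ by Fubini and the $L^1$-dilation invariance $\int_\R \tfrac{1}{t}g(\tfrac{x}{t})\,dx=\int_\R g=1$, together with $\|\H_\varphi(g\otimes\cdots\otimes g)\|_{L^1}\le \|\H_\varphi\|_{\phardy\to L^1}\,\|g\otimes\cdots\otimes g\|_{\phardy}$, yields
\[
\pint\pvarphi\,\pdt \le \|\H_\varphi\|_{\phardy\to L^1}\, C_2\,\|g\|_{H^1(\R)}^n<\infty,
\]
which is exactly \eqref{main inequality}.

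I expect the main obstacle to be justifying the interchange of integrals and confirming that $\H_\varphi(g\otimes\cdots\otimes g)$ is genuinely in $L^1(\R^n)$ a priori (so that the $L^1$-norm bound may be invoked), given that $\varphi$ is only locally integrable. The clean escape is again nonnegativity: one runs the whole argument on the truncated kernel $\varphi_R=\varphi\cdot\chi_{(0,R)^n}\in L^1((0,\infty)^n)$, for which all Fubini applications are legitimate and $\H_{\varphi_R}(g\otimes\cdots\otimes g)\in L^1$ is automatic, obtains
\[
\int_{(0,R)^n}\pvarphi\,\pdt\le \|\H_\varphi\|_{\phardy\to L^1}\,C_2\,\|g\|_{H^1(\R)}^n,
\]
with a bound uniform in $R$ (since $\H_{\varphi_R}f\le \H_\varphi f$ pointwise for $f\ge 0$, so $\|\H_{\varphi_R}\|\le\|\H_\varphi\|$), and then lets $R\to\infty$ by monotone convergence. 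A secondary subtlety is verifying $g\otimes\cdots\otimes g\in\phardy$ with finite norm, but this is supplied verbatim by Remark \ref{H1 implies Hn}(ii), so no new work is needed there.
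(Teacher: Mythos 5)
Your overall strategy---test $\H_\varphi$ on a tensor product, exploit positivity to apply Tonelli, and read off \eqref{main inequality} from the $L^1$ bound---is exactly the shape of the paper's argument. But your choice of test function is fatally flawed: a nonnegative bump $g$ with $\int_\R g=1$ is \emph{not} in $H^1(\R)$. Every $f\in H^1(\R)$ satisfies $\int_\R f=0$ (its Fourier transform is continuous and must vanish at the origin, since $f$ and $Hf$ both lie in $L^1$); consequently the only nonnegative element of $H^1(\R)$ is the zero function. So $g\otimes\cdots\otimes g\notin\phardy$, Remark \ref{H1 implies Hn}(ii) does not apply to it (that remark requires $g\in H^1(\R)$), and the hypothesis $\|\H_\varphi\|_{\phardy\to L^1}<\infty$ gives you no control over $\H_\varphi(g\otimes\cdots\otimes g)$. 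Worse, even if you tried to repair this by subtracting a correction to force mean zero, your central identity $\int_{\R^n}\H_\varphi(g\otimes\cdots\otimes g)\,d\mathbf{x}=\bigl(\int_\R g\bigr)^n\pint\pvarphi\,\pdt$ would then read $0=0$ and yield nothing: integrating $\H_\varphi f$ over all of $\R^n$ can never detect $\pint\pvarphi\,\pdt$ for an admissible $f$, precisely because of the cancellation built into $H^1$.

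The paper's fix is the point you are missing: it takes $f(x)=x/(1+x^2)^2$, which is odd (hence mean zero and in $H^1(\R)$) but \emph{nonnegative on $(0,\infty)$}, and then integrates $\H_\varphi(f\otimes\cdots\otimes f)$ only over the positive orthant $[0,\infty)^n$, where the integrand $\prod_j \frac{x_j/t_j}{[1+(x_j/t_j)^2]^2}\cdot\frac{\varphi(\mathbf{t})}{t_1\cdots t_n}$ is nonnegative. Tonelli then gives the lower bound $\bigl[\int_0^\infty \frac{y}{(1+y^2)^2}\,dy\bigr]^n\pint\pvarphi\,\pdt\le\|\H_\varphi(f\otimes\cdots\otimes f)\|_{L^1(\R^n)}<\infty$, with a strictly positive constant in front. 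Your truncation device $\varphi_R=\varphi\chi_{(0,R)^n}$ is a reasonable precaution but is not needed once positivity on the orthant is in place, since Tonelli requires no a priori integrability. As written, your proof does not establish the lemma.
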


\begin{lemma}\label{key lemma}
	Let $\varphi$ be such that (\ref{main inequality}) holds. Then:
	\begin{enumerate}[\rm (i)]
		\item $\H_\varphi$ is bounded on $H^1(\pR)$, moreover,
		$$\|\H_\varphi\|_{H^1(\pR)\to H^1(\pR)}\leq \pint \pvarphi \pdt.$$
		\item If supp $\varphi\subset [0,1]^n$, then
		$$\|\H_\varphi\|_{H^1(\pR)\to H^1(\pR)}= \int_0^1\cdots \int_0^1 \pvarphi \pdt.$$
	\end{enumerate}
\end{lemma}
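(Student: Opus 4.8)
The plan is to prove Lemma \ref{key lemma} by reducing the $H^1$-boundedness of $\H_\varphi$ to its action on the building blocks of the multi-parameter Hardy space, namely the convolutions with the tensor-product Poisson kernels described in Theorem \ref{boundary value function}, and then to exploit the commutation between $\H_\varphi$ and dilations together with the equivalence $\|f\|_{\phardy}\sim\sum_{{\bf e}\in\mathbb E}\|{\bf H_e}f\|_{L^1(\R^n)}$ from Theorem \ref{equivalent characterizations of H1}. The crucial algebraic fact I would establish first is a \emph{covariance identity}: since $\H_\varphi$ acts by averaging over anisotropic dilations $f\mapsto f(\cdot/t_1,\ldots,\cdot/t_n)$, it commutes with each Hilbert transform $H_j$ (dilations commute with the Hilbert transform in each variable up to a sign that is absorbed by $t_j>0$), so that ${\bf H_e}(\H_\varphi f)=\H_\varphi({\bf H_e}f)$ for every ${\bf e}\in\mathbb E$. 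This is what lets the $L^1$-estimate be applied componentwise.

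For part (i), I would argue as follows. By density it suffices to treat $f$ whose boundary extension $F$ lies in $\mathcal H^1_a(\mathbb C_+^n)$, so that $F({\bf x}+i{\bf y})=f*(\otimes_{j=1}^n P_{y_j})({\bf x})$. The key computation is to write, for fixed ${\bf t}\in(0,\infty)^n$, the dilate $f(\cdot/t_1,\ldots,\cdot/t_n)$ as the boundary value of a holomorphic function on $\mathbb C_+^n$ obtained by dilating $F$; since $\H^1_a(\mathbb C_+^n)$ is a linear space invariant under these anisotropic dilations and the map ${\bf t}\mapsto f(\cdot/{\bf t})$ is integrated against the finite nonnegative measure $\varphi({\bf t})\,d{\bf t}/(t_1\cdots t_n)$, the integral $\H_\varphi f$ is again (the boundary value of) an element of $\mathcal H^1_a(\mathbb C_+^n)$. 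Then I invoke Theorem \ref{boundary value function} to pass back to $\|\cdot\|_{\phardy}\sim\|\cdot\|_{L^1}$ and estimate
\begin{align*}
\|\H_\varphi f\|_{L^1(\R^n)}
&\le \pint \left\|f\left(\tfrac{\cdot}{t_1},\ldots,\tfrac{\cdot}{t_n}\right)\right\|_{L^1(\R^n)}\frac{\pvarphi}{\pt}\,\pdt\\
&= \|f\|_{L^1(\R^n)}\pint \pvarphi\,\pdt,
\end{align*}
by the Minkowski integral inequality and the change of variables $x_j\mapsto t_j x_j$, exactly as in the $L^p$ proof of Theorem \ref{sharp constants for Lp}. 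Combining this with the componentwise identity ${\bf H_e}\H_\varphi=\H_\varphi{\bf H_e}$ and summing over ${\bf e}\in\mathbb E$ yields the claimed bound on $\|\H_\varphi\|_{H^1\to H^1}$.

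For part (ii), when $\operatorname{supp}\varphi\subset[0,1]^n$ I need the matching lower bound, and this is where I expect the main difficulty. The idea is to test $\H_\varphi$ against a family of near-extremizers in $\phardy$ and show the operator norm is approached. A natural choice is to use tensor products $f=g\otimes\cdots\otimes g$ with $g$ a one-dimensional $H^1(\R)$ function concentrated at large scale, so that by Remark \ref{H1 implies Hn}(ii) we control $\|f\|_{\phardy}$, while the support restriction $t_j\le 1$ forces the dilations $x_j/t_j$ to \emph{expand} the function and keeps the cancellation structure intact; on the region where $g$ is essentially constant and positive, $\H_\varphi$ acts almost like multiplication by $\int_0^1\cdots\int_0^1\varphi\,\pdt$ without losing $L^1$ mass to cancellation. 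The technical heart is to verify that the ratio $\|\H_\varphi f\|_{\phardy}/\|f\|_{\phardy}$ tends to $\int_{[0,1]^n}\varphi\,\pdt$; I would do this by choosing $g$ (or its analytic completion in $\mathcal H^1_a(\mathbb C_+)$) so that the smooth maximal function $M_\Phi(\H_\varphi f)$ is comparable to the dilated average of $M_\Phi f$, and then letting the scale parameter tend to infinity so boundary effects from the finite support of $\varphi$ and the normalization wash out. Because the upper bound from part (i) already gives ``$\le$'', matching it with this lower bound yields equality. The subtle point to get right is ensuring the test functions stay genuinely in $\phardy$ (not merely in $L^1$) with controlled norm, which is precisely why the support condition $\operatorname{supp}\varphi\subset[0,1]^n$ is imposed: it guarantees the dilations do not shrink $g$ below the scale where the Hardy-space cancellation would cause a loss.
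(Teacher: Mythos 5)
Your part (i) has the right spirit but loses the constant that the lemma actually asserts. The norm in Lemma \ref{key lemma} is $\|f\|_{\phardy}=\|M_\Phi f\|_{L^1(\R^n)}$, and Theorem \ref{equivalent characterizations of H1} only gives $\|f\|_{\phardy}\sim\sum_{{\bf e}}\|{\bf H_e}f\|_{L^1}$ up to two-sided constants; routing the estimate through the Hilbert-transform characterization therefore yields $\|\H_\varphi\|\leq C\pint\pvarphi\pdt$ with an uncontrolled $C$, not the sharp bound $\leq\pint\pvarphi\pdt$ that is needed (and that must match the lower bound in (ii) to produce an equality). The paper instead proves the pointwise domination $M_\Phi(\H_\varphi f)({\bf x})\leq \H_\varphi(M_\Phi f)({\bf x})$ by Fubini and the dilation identity $(\otimes\Phi^{(j)}_{r_j})*f(\cdot/{\bf t})=\bigl(f*(\otimes\Phi^{(j)}_{r_j/t_j})\bigr)(\cdot/{\bf t})$, and then applies the $L^1$ case of Theorem \ref{sharp constants for Lp} to $M_\Phi f$; this preserves the constant exactly. (Your commutation identity ${\bf H_e}\H_\varphi=\H_\varphi{\bf H_e}$ is correct and is exactly how the paper handles the $\|\cdot\|_*$-norm version in Lemma \ref{key lemma, Hilbert transforms}, but it does not give the sharp bound for the maximal-function norm.) A further soft spot: boundary values of $\mathcal H^1_a(\mathbb C_+^n)$ are not dense in $\phardy$ --- they form the ``analytic'' part corresponding to one choice of signs of the Fourier support --- so ``by density it suffices to treat $f$ with $F\in\mathcal H^1_a$'' is not available as stated.

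For part (ii) your proposal identifies the correct intuition (test against near-extremizers spread out to large scale, on which $\H_\varphi$ acts like multiplication by $\int_{[0,1]^n}\varphi$) but stops short of the estimate that makes it work, and as written it would not close. The paper's mechanism is: first truncate to $\varphi_\delta=\varphi\chi_{[\delta,1]^n}$ (this is what makes the later derivative bounds uniform, with the $\delta^{-2}$ factors); then take the explicit family $f_\varepsilon=$ boundary value of $F_\varepsilon({\bf z})=\prod_j(z_j+i)^{-(1+\varepsilon)}$ and prove
$$\frac{\bigl\|\H_{\varphi_\delta}f_\varepsilon- f_\varepsilon\int\varphi_\delta\bigr\|_{\phardy}}{\|f_\varepsilon\|_{\phardy}}\longrightarrow 0\qquad(\varepsilon\to0)$$
by a mean-value estimate on $\phi_{\varepsilon,{\bf z}}({\bf t})=\prod_j t_j^\varepsilon(z_j+it_j)^{-(1+\varepsilon)}$ over $[\delta,1]^n$. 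The quantitative point you are missing is why this ratio vanishes: the numerator is controlled by $\int_\R(x^2+1)^{-1}dx<\infty$ (plus a term with an explicit factor $\varepsilon$), while $\|f_\varepsilon\|_{\phardy}\sim\bigl[\int_\R(x^2+1)^{-(1+\varepsilon)/2}dx\bigr]^n\to\infty$ like $\varepsilon^{-n}$. Your phrase ``$g$ essentially constant and positive on a region'' glosses over the fact that $H^1$ functions must carry cancellation; the resolution is precisely to work with the holomorphic extension, where closeness to a multiple of $f_\varepsilon$ can be measured by a derivative bound rather than by positivity. Finally, one also needs the truncation error $\|\H_\varphi-\H_{\varphi_\delta}\|\leq\int_{(0,1]^n\setminus[\delta,1]^n}\varphi\to0$ as $\delta\to0$ to recover the full integral; this step is absent from your sketch.
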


\begin{proof}[Proof of Lemma \ref{the necessary condition}]
	Since the function
	$$f(x)=\frac{x}{(1+x^2)^2},\quad x\in\R,$$
	is in $H^1(\R)$ (see \cite[Theorem 3.3]{HKQ1}), Remark \ref{H1 implies Hn}(ii) yields that
	$$f\otimes\cdots\otimes f({\bf x})=\prod_{j=1}^n \frac{x_j}{(1+x_j^2)^2},\quad {\bf x}=(x_1,\ldots,x_n)\in\R^n,$$
	is in $H^1(\R\times\cdots\times\R)$. Hence, the function
	$$\H_\varphi\left(f\otimes\cdots\otimes f\right)({\bf x})= \int_0^\infty\cdots \int_0^\infty \prod_{j=1}^n \frac{\frac{x_j}{t_j}}{\left[1+\left(\frac{x_j}{t_j}\right)^2\right]^2} \frac{\varphi(t_1,\ldots,t_n)}{t_1\ldots t_n}dt_1\cdots dt_n,$$
	${\bf x}=(x_1,\ldots,x_n)\in\R^n$,	is in $L^1(\R^n)$ since $\H_\varphi$ is bounded from $H^1(\R\times\cdots\times\R)$ into $L^1(\R^n)$. As a consequence,
	\begin{eqnarray*}
	 &&\left[\int_0^\infty\frac{y}{(1+y^2)^2}dy\right]^n \int_0^\infty\cdots \int_0^\infty \varphi(t_1,\ldots,t_n)dt_1\cdots dt_n\\
	 &=& \int_{[0,\infty)^n}d{\bf x}\int_0^\infty\cdots \int_0^\infty \prod_{j=1}^n \frac{\frac{x_j}{t_j}}{\left[1+\left(\frac{x_j}{t_j}\right)^2\right]^2} \frac{\varphi(t_1,\ldots,t_n)}{t_1\ldots t_n}dt_1\cdots dt_n\\
	 &\leq& \|\H_\varphi\left(f\otimes\cdots\otimes f\right)\|_{L^1(\R^n)} <\infty
	\end{eqnarray*}
	which proves  (\ref{main inequality}), and thus ends the proof of Lemma \ref{the necessary condition}.
	
\end{proof}

\begin{proof}[Proof of Lemma \ref{key lemma}]
	(i)	For any $f\in \phardy$, by the Fubini theorem, 
	\begin{eqnarray*}
		&& M_\Phi(\H_\varphi f)({\bf x}) \\
		&=& \sup_{(r_1,\ldots,r_n)\in (0,\infty)^n} \left|\int_{\R^n} d{\bf y} \pint (\otimes_{j=1}^n \Phi^{(j)}_{r_j})({\bf x}-{\bf y}) f\left(\frac{y_1}{t_1},\ldots,\frac{y_n}{t_n}\right)\frac{\pvarphi}{\pt}\pdt\right|\\
		&=& \sup_{(r_1,\ldots,r_n)\in (0,\infty)^n}\left|\pint \left(f*(\otimes_{j=1}^n \Phi^{(j)}_{r_j/t_j})\right)\left(\frac{x_1}{t_1},\ldots,\frac{x_n}{t_n}\right)\frac{\pvarphi}{\pt}\pdt\right|\\
		&\leq&  \H_\varphi(M_\Phi f)({\bf x})
	\end{eqnarray*}
	for all ${\bf x}=(x_1,\ldots,x_n)\in \R^n$. Hence, by Theorem \ref{sharp constants for Lp},
	\begin{eqnarray*}
		\|\H_\varphi f\|_{\phardy} &=&\|M_\Phi(\H_\varphi f)\|_{L^1(\R^n)}\\
		&\leq& \|\H_\varphi(M_\Phi f)\|_{L^1(\R^n)}\\
		&\leq& \pint \pvarphi \pdt \|M_\Phi f\|_{L^1(\R^n)}\\
		&=& \pint \pvarphi \pdt  \|f\|_{\phardy}.
	\end{eqnarray*}
	This proves that $\H_\varphi$ is bounded on $\phardy$, moreover,
	\begin{equation}\label{key lemma, 1}
		\|\H_\varphi\|_{\phardy\to \phardy}\leq \pint \pvarphi \pdt.
	\end{equation}	
	(ii)	Let $\delta\in (0,1)$ be arbitrary. Set $\varphi_\delta({\bf t}):= \varphi({\bf t})\chi_{[\delta,1]^n}({\bf t})$ for all ${\bf t}\in (0,\infty)^n$. Then, by (\ref{key lemma, 1}), we see that
	\begin{eqnarray*}
	 \|\H_{\varphi_\delta}\|_{\phardy \to \phardy}&\leq& \pint \varphi_\delta(t_1,\ldots,t_n)\pdt \\
	 &=& \int_{\delta}^{1}\cdots \int_{\delta}^{1} \pvarphi\pdt<\infty
	\end{eqnarray*}
	and 
	\begin{eqnarray}\label{key lemma, 2}
		&&\|\H_{\varphi}- \H_{\varphi_\delta}\|_{\phardy \to \phardy}\\\nonumber
		&\leq& \pint[\pvarphi- \varphi_\delta(t_1,\ldots,t_n)]\pdt\\\nonumber
		&=&  \int_{(0,1]^n\setminus [\delta,1]^n} \varphi({\bf t})d{\bf t}<\infty.\nonumber
	\end{eqnarray}

	For any $\varepsilon>0$, we define the function $F_\varepsilon:\mathbb C_+^n\to\mathbb C$ by
	$$F_\varepsilon(z_1,\ldots,z_n)=\prod_{j=1}^n\frac{1}{(z_j+i)^{1+\varepsilon}}$$
	where $\zeta^{1+\varepsilon}= |\zeta|^{1+\varepsilon} e^{i(1+\varepsilon)\arg \zeta}$ for all $\zeta\in\mathbb C$. Denote by $f_\varepsilon$ the boundary value function of $F_\varepsilon$, that is, $f_\varepsilon({\bf x})=\lim_{{\bf y}\to 0} F_\varepsilon({\bf x}+i{\bf y})$. Then, by Theorem \ref{boundary value function},
	\begin{equation}\label{key lemma, 3}
		\|f_\varepsilon\|_{\phardy}\sim \|F_\varepsilon\|_{\H^1_a(\mathbb C^n_+)}= \left[\int_{-\infty}^{\infty} \frac{1}{\sqrt{x^2+1}^{1+\varepsilon}}dx\right]^n <\infty,
	\end{equation}
	where the constants are independent of $\varepsilon$.

	For all ${\bf z}={\bf x} +i{\bf y}=(x_1+iy_1,\ldots,x_n+iy_n)=(z_1,\ldots,z_n)\in \mathbb C^n_+$, by the Fubini theorem and Theorem \ref{boundary value function}, we get
	\begin{eqnarray*}
		&& \left(\H_{\varphi_\delta}(f_\varepsilon)- f_{\varepsilon} \int_{(0,\infty)^n} \varphi_\delta({\bf t})d{\bf t}\right)*(\otimes_{j=1}^n P_{y_j})({\bf x})\\
		&=& \pint \prod_{j=1}^n\frac{1}{(\frac{z_j}{t_j}+i)^{1+\varepsilon}}\frac{\varphi_\delta(t_1,\ldots,t_n)}{\pt}\pdt-\\
		&&- \prod_{j=1}^n\frac{1}{(z_j+i)^{1+\varepsilon}}\pint\varphi_\delta(t_1,\ldots,t_n)\pdt \\
		&=&\int_{\delta}^{1}\cdots \int_{\delta}^{1} [\phi_{\varepsilon,{\bf z}}(t_1,\ldots,t_n)- \phi_{\varepsilon,{\bf z}}(1,\ldots,1)]\pvarphi \pdt,
	\end{eqnarray*}
	where $\phi_{\varepsilon,{\bf z}}(t_1,\ldots,t_n):= \prod_{j=1}^n \frac{t_j^\varepsilon}{(z_j+it_j)^{1+\varepsilon}}$. For any ${\bf t}=(t_1,\ldots,t_n)\in [\delta,1]^n$, a simple calculus gives
	\begin{eqnarray*}
		&&|\phi_{\varepsilon,{\bf z}}(t_1,\ldots,t_n)- \phi_{\varepsilon,{\bf z}}(1,\ldots,1)| \\
		&\leq& \sup_{s\in [0,1]} \sum_{j=1}^n |t_j-1| \left|\frac{\partial \phi_{\varepsilon,{\bf z}}}{\partial t_j}(t_j+ s(1-t_j))\right| \\
		&\leq& \sum_{j=1}^n \left(\frac{\varepsilon \delta^{-2}}{\sqrt{x_j^2+1}^{1+\varepsilon}} + \frac{(1+\varepsilon) \delta^{-2}}{\sqrt{x_j^2+1}^{2+\varepsilon}}\right)\prod_{\substack{k=1\\ k\ne j}}^n \frac{\delta^{-1}}{\sqrt{x_k^2+1}^{1+\varepsilon}}.
	\end{eqnarray*}
	Therefore, by Theorem \ref{boundary value function} again, 
	\begin{eqnarray*}
		&&\left\|\H_{\varphi_\delta}(f_\varepsilon)- f_{\varepsilon} \int_{(0,\infty)^n} \varphi_\delta({\bf t})d{\bf t}\right\|_{\phardy}\\
		 &\lesssim&  \left\|\sup_{(y_1,\ldots,y_n)\in (0,\infty)^n}\left(\H_{\varphi_\delta}(f_\varepsilon)- f_{\varepsilon} \int_{(0,\infty)^n} \varphi_\delta({\bf t})d{\bf t}\right)*(\otimes_{j=1}^n P_{y_j})\right\|_{L^1(\R^n)}\\
		&\leq&\int_{\delta}^{1}\cdots \int_{\delta}^{1} \pvarphi\pdt \times\\
		&&\times\sum_{j=1}^n \int_{-\infty}^{\infty}\cdots \int_{-\infty}^{\infty} \left(\frac{\varepsilon \delta^{-2}}{\sqrt{x_j^2+1}^{1+\varepsilon}} + \frac{(1+\varepsilon) \delta^{-2}}{\sqrt{x_j^2+1}^{2+\varepsilon}}\right)\prod_{\substack{k=1\\ k\ne j}}^n \frac{\delta^{-1}}{\sqrt{x_k^2+1}^{1+\varepsilon}} dx_1\ldots dx_n.
	\end{eqnarray*}
	This, together with (\ref{key lemma, 3}), yields
	\begin{eqnarray}\label{an useful estimate}
		&&\frac{\left\|\H_{\varphi_\delta}(f_\varepsilon)- f_{\varepsilon} \int_{(0,\infty)^n} \varphi_\delta({\bf t})d{\bf t}\right\|_{\phardy}}{\|f_\varepsilon\|_{\phardy}}\\
		&\lesssim& \int_{\delta}^{1}\cdots \int_{\delta}^{1} \pvarphi\pdt \times \nonumber\\
		&&\times \sum_{j=1}^n \frac{\delta^{1-n}\left[\varepsilon \delta^{-2} \int_{-\infty}^{\infty}\frac{1}{\sqrt{x_j^2+1}^{1+\varepsilon}}dx_j + (1+\varepsilon)\delta^{-2}\int_{-\infty}^{\infty}\frac{1}{\sqrt{x_j^2+1}^{2+\varepsilon}}dx_j\right]}{\int_{-\infty}^{\infty} \frac{1}{\sqrt{x_j^2+1}^{1+\varepsilon}}dx_j}   \nonumber\\
		&\lesssim& \int_{\delta}^{1}\cdots \int_{\delta}^{1} \pvarphi\pdt \times\nonumber\\
		&&\times\sum_{j=1}^n \left[\varepsilon \delta^{-1-n}+ \frac{(1+\varepsilon)\delta^{-1-n}\int_{-\infty}^{\infty} \frac{1}{x_j^2+1}dx_j}{\int_{-\infty}^{\infty} \frac{1}{\sqrt{x_j^2+1}^{1+\varepsilon}}dx_j}\right] \to 0 \nonumber
	\end{eqnarray}
	as $\varepsilon \to 0$. As a consequence,
	\begin{eqnarray*}
		\int_\delta^1 \cdots \int_\delta^1 \pvarphi \pdt &=& \int_{(0,\infty)^n} \varphi_\delta({\bf t})d{\bf t} \\
		&\leq& \|\H_{\varphi_\delta}\|_{\phardy\to \phardy}.
	\end{eqnarray*}
	This, together with (\ref{key lemma, 2}), allows us to conclude that
	$$\|\H_\varphi\|_{\phardy\to \phardy} \geq \int_0^1\cdots \int_0^1 \pvarphi\pdt$$
	since $\lim_{\delta\to 0}\int_{(0,1]^n\setminus [\delta,1]^n}\varphi({\bf t})d{\bf t}=0$. Hence, by (\ref{key lemma, 1}),
	$$\|\H_\varphi\|_{\phardy\to \phardy}= \int_0^1\cdots \int_0^1 \pvarphi\pdt.$$
	This completes the proof of Lemma \ref{key lemma}.

\end{proof}

Now we are ready to give the proof of Theorem \ref{main theorem}.

\begin{proof}[\bf Proof of Theorem \ref{main theorem}]
	By Lemma \ref{key lemma}(i), it suffices to prove that
	\begin{equation}\label{a lower bound}
		\pint \pvarphi\pdt\leq \|\H_\varphi\|_{\phardy\to \phardy}
	\end{equation}
	provided $\H_\varphi$ is bounded on $\phardy$. 	Indeed, by Lemma \ref{the necessary condition}, we have
	$$\pint \pvarphi\pdt<\infty.$$
	
	For any $m>0$, set $\varphi_m({\bf t}):= \varphi(m{\bf t})\chi_{(0,1)^n}({\bf t})$. Then, by Lemma \ref{key lemma}(i), we see that
	\begin{eqnarray}\label{truncation}
		&&\left\|\H_\varphi - \H_{\varphi_m\left(\frac{\cdot}{m}\right)}\right\|_{\phardy\to\phardy} \\
		&=& \left\|\H_{\varphi-\varphi_m\left(\frac{\cdot}{m}\right)}\right\|_{\phardy\to\phardy}\nonumber\\
		&\leq& \pint \left[\pvarphi-\varphi_m\left(\frac{t_1}{m},\ldots,\frac{t_n}{m}\right)\right]\pdt\nonumber\\
		&=& \int_{(0,\infty)^n\setminus (0,m)^n}\varphi({\bf t}) d{\bf t}.\nonumber
	\end{eqnarray}
Noting that
$$\left\|f\left(\frac{\cdot}{m}\right)\right\|_{\phardy}= m^n \|f(\cdot)\|_{\phardy}\quad\mbox{and}\quad \H_{\varphi_m\left(\frac{\cdot}{m}\right)}=\H_{\varphi_m}f\left(\frac{\cdot}{m}\right)$$
for all $f\in \phardy$, Lemma \ref{key lemma}(ii) gives
\begin{eqnarray*}
	\left\|\H_{\varphi_m\left(\frac{\cdot}{m}\right)}\right\|_{\phardy\to\phardy} &=& m^n \left\|\H_{\varphi_m}\right\|_{\phardy\to\phardy}\\
	&=& m^n \int_0^1\cdots\int_0^1 \varphi_m(t_1,\ldots,t_n)\pdt\\
	&=& \int_0^m\cdots\int_0^m \pvarphi \pdt.
\end{eqnarray*}
Combining this with (\ref{truncation})  allow us to conclude that
$$\|\H_\varphi\|_{\phardy\to\phardy}\geq \pint\pvarphi\pdt$$
since $\lim_{m\to\infty}\int_{(0,\infty)^n\setminus (0,m)^n}\varphi({\bf t}) d{\bf t}=0$. This proves (\ref{a lower bound}), and thus ends the proof of Theorem \ref{main theorem}.	
\end{proof}

From Theorem \ref{equivalent characterizations of H1}, one can define $\phardy$ as the space of functions $f\in L^1(\R^n)$ such that
\begin{equation}\label{norm via Hilbert transforms}
\|f\|_*:= \sum_{{\bf e}\in \mathbb E} \|{\bf H_e} f\|_{L^1(\R^n)}<\infty.
\end{equation}

Our last result is the following:

\begin{theorem}\label{replace by the Hilbert transforms}
	$\H_\varphi$ is bounded on $(H^1(\R\times\cdots\times\R),\|\cdot\|_*)$ if and only if (\ref{main inequality}) holds. Moreover, in that case,	
	$$\|\H_\varphi\|_{(H^1(\R\times\cdots\times\R),\|\cdot\|_*)\to (H^1(\R\times\cdots\times\R),\|\cdot\|_*)}= \int_0^\infty\cdots \int_0^\infty \varphi(t_1,\ldots,t_n)dt_1\cdots dt_n$$
	and, for any ${\bf e}\in \mathbb E$, $\H_\varphi$ commutes with ${\bf H_e}$ on $\phardy$.
\end{theorem}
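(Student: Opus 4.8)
The plan is to derive the statement from three ingredients already proved: the sharp $\phardy$-norm identity (Theorem \ref{main theorem}), the case $p=1$ of the sharp $L^p$-norm identity (Theorem \ref{sharp constants for Lp}), and the norm equivalence $\|\cdot\|_{\phardy}\sim\|\cdot\|_*$ (Theorem \ref{equivalent characterizations of H1}). The boundedness dichotomy and the necessity of (\ref{main inequality}) are then immediate: since $\|\cdot\|_*$ and $\|\cdot\|_{\phardy}$ are equivalent norms on $\phardy$, $\H_\varphi$ is bounded with respect to one exactly when it is bounded with respect to the other, and Theorem \ref{main theorem} says this happens iff (\ref{main inequality}) holds. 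The real content is that the operator norm measured in $\|\cdot\|_*$ is again exactly $\pint\pvarphi\pdt$; note that mere equivalence of the two norms does \emph{not} force equality of the two operator norms, so genuinely new input is required, and this is supplied by the commutation relation.

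First I would establish the commutation ${\bf H_e}\H_\varphi=\H_\varphi{\bf H_e}$ on $\phardy$ for every ${\bf e}\in\mathbb E$; since ${\bf H_e}=\prod_j H_j^{e_j}$, it suffices to treat a single factor $H_j$. I would argue on the Fourier side. For $g\in\phardy\subset L^1(\R^n)$ both $g$ and $H_jg$ lie in $L^1(\R^n)$, so the case $p=1$ of Corollary \ref{relation to the Fourier transform}(i) (legitimate because (\ref{main inequality 1}) collapses to (\ref{main inequality}) when $p=1$) gives $\widehat{\H_\varphi h}=\H^*_\varphi\hat h$ for $h=g$ and for $h=H_jg$. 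Writing $m_j(\xi)=-i\,\sign(\xi_j)$ for the Fourier multiplier of $H_j$, the key point is that $m_j$ is invariant under the positive dilations $\xi_j\mapsto t_j\xi_j$ occurring inside $\H^*_\varphi$, since $\sign(t_j\xi_j)=\sign(\xi_j)$ for $t_j>0$. Consequently
\[
\widehat{\H_\varphi H_jg}(\xi)=\int_{(0,\infty)^n} m_j(t_1\xi_1,\ldots,t_n\xi_n)\,\hat g(t_1\xi_1,\ldots,t_n\xi_n)\,\varphi(t)\,dt=m_j(\xi)\,\H^*_\varphi\hat g(\xi)=\widehat{H_j\H_\varphi g}(\xi),
\]
and inverting the Fourier transform yields $H_j\H_\varphi g=\H_\varphi H_jg$. (Alternatively one may observe directly that $H_j$ commutes with the positive dilations in the $j$th variable out of which $\H_\varphi$ is averaged.)

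Granting the commutation, the sharp upper bound is painless. By the definition (\ref{norm via Hilbert transforms}) and the commutation,
\[
\|\H_\varphi f\|_*=\sum_{{\bf e}\in\mathbb E}\|{\bf H_e}\H_\varphi f\|_{L^1(\R^n)}=\sum_{{\bf e}\in\mathbb E}\|\H_\varphi({\bf H_e}f)\|_{L^1(\R^n)},
\]
and since the case $p=1$ of Theorem \ref{sharp constants for Lp} says that $\H_\varphi$ is bounded on $L^1(\R^n)$ with norm exactly $\pint\pvarphi\pdt$, each summand is at most $\big(\pint\pvarphi\pdt\big)\|{\bf H_e}f\|_{L^1(\R^n)}$. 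Summing over ${\bf e}\in\mathbb E$ gives $\|\H_\varphi f\|_*\le\big(\pint\pvarphi\pdt\big)\|f\|_*$, i.e. the $\|\cdot\|_*$-operator norm is $\le\pint\pvarphi\pdt$.

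For the matching lower bound I would recycle the test functions $f_\varepsilon$ from the proof of Lemma \ref{key lemma}, transferring the estimates through the norm equivalence. Assuming first that $\varphi$ is supported in $[0,1]^n$ and setting $\varphi_\delta=\varphi\chi_{[\delta,1]^n}$, estimate (\ref{an useful estimate}) shows $\|\H_{\varphi_\delta}f_\varepsilon-f_\varepsilon\int_{(0,\infty)^n}\varphi_\delta\|_{\phardy}/\|f_\varepsilon\|_{\phardy}\to0$ as $\varepsilon\to0$; because $\|\cdot\|_*\sim\|\cdot\|_{\phardy}$ with universal constants, the same ratio measured in $\|\cdot\|_*$ also tends to $0$, whence $\|\H_{\varphi_\delta}\|_{*\to*}\ge\int_{(0,\infty)^n}\varphi_\delta$ upon letting $\varepsilon\to0$. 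Combining this with the upper bound applied to $\H_{\varphi-\varphi_\delta}$ and letting $\delta\to0$ yields $\|\H_\varphi\|_{*\to*}\ge\int_{[0,1]^n}\varphi$, hence equality for such $\varphi$. The general case follows from the dilation-and-truncation scheme in the proof of Theorem \ref{main theorem}, which transfers verbatim because $\|f(\cdot/m)\|_*=m^n\|f\|_*$ (itself a consequence of the commutation of each $H_j$ with dilations). The hard part is the commutation step: once it is in place the upper bound is automatic and the lower bound is a routine transfer of work already done for $\|\cdot\|_{\phardy}$.
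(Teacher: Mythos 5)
Your proposal is correct and follows essentially the same route as the paper: the commutation ${\bf H_e}\H_\varphi=\H_\varphi{\bf H_e}$ is proved on the Fourier side exactly as in Lemma \ref{commuting relation to the Hilbert transforms}, the upper bound via commutation plus the $p=1$ case of Theorem \ref{sharp constants for Lp} is Lemma \ref{key lemma, Hilbert transforms}(i), and the lower bound by transferring estimate (\ref{an useful estimate}) through $\|\cdot\|_*\sim\|\cdot\|_{\phardy}$ and then running the dilation--truncation scheme is precisely the paper's intended (but omitted) argument for Lemma \ref{key lemma, Hilbert transforms}(ii) and Theorem \ref{replace by the Hilbert transforms}. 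Your explicit remarks that norm equivalence alone does not give equality of operator norms, and that $\|f(\cdot/m)\|_*=m^n\|f\|_*$ follows from the commutation of $H_j$ with positive dilations, correctly fill in the details the paper leaves to the reader.
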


In order to prove Theorem \ref{replace by the Hilbert transforms}, we need the following two lemmas.

\begin{lemma}\label{commuting relation to the Hilbert transforms}
	Let $\varphi$ be such that (\ref{main inequality}) holds. Then, for any ${\bf e}\in \mathbb E$, $\H_\varphi$ commutes with the	Hilbert transform ${\bf H_e}$ on $\phardy$.
\end{lemma}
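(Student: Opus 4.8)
The plan is to show that for each $\mathbf{e}\in\mathbb{E}$, the operator $\mathbf{H_e}$ commutes with $\H_\varphi$ on $\phardy$. Since $\mathbf{H_e}=\prod_{j=1}^n H_j^{e_j}$ is a product of single-variable Hilbert transforms acting in distinct variables, and these all commute with one another, it suffices by induction to prove that a single Hilbert transform $H_j$ (computed in the $j^{\mathrm{th}}$ variable) commutes with $\H_\varphi$. First I would reduce to a dense subclass: by Lemma \ref{key lemma}(i) the operator $\H_\varphi$ is bounded on $\phardy$, and $H_j$ is bounded on $\phardy$ by Theorem \ref{equivalent characterizations of H1}; hence both $H_j\H_\varphi$ and $\H_\varphi H_j$ are bounded on $\phardy$, so it is enough to verify the identity $H_j\H_\varphi f=\H_\varphi H_j f$ on a dense subset, for instance on nice functions such as tensor products of Schwartz functions (or boundary values of elements of $\mathcal H^1_a(\mathbb{C}^n_+)$), where all interchanges of integrals are justified.

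The heart of the matter is the elementary \emph{dilation-invariance} of the Hilbert transform: in the single variable $x_j$, writing $g_{t_j}(x_j)=g(x_j/t_j)$ for $t_j>0$, one has $H_j(g_{t_j})=(H_j g)_{t_j}$, because the kernel $1/y$ is homogeneous of degree $-1$ and the principal value is preserved under the change of variable $y\mapsto t_j y$ (the sign of the dilation factor is positive, so no sign issue arises). The key step is therefore to pass $H_j$ under the integral sign in the definition of $\H_\varphi$: for suitable $f$,
$$
H_j\big(\H_\varphi f\big)(\mathbf{x})
= H_j\!\left(\pint f\!\left(\tfrac{\cdot}{t_1},\ldots,\tfrac{\cdot}{t_n}\right)\!\frac{\pvarphi}{\pt}\,\pdt\right)(\mathbf{x}).
$$
Because $H_j$ acts only in the $j^{\mathrm{th}}$ variable and the dilation factors $t_1,\ldots,t_n$ are fixed inside the integral, applying the dilation-invariance in the $x_j$-variable turns the integrand into $(H_j f)(x_1/t_1,\ldots,x_n/t_n)$, which upon reintegration is exactly $\H_\varphi(H_j f)(\mathbf{x})$.

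The main obstacle is the justification of interchanging the (principal-value, singular) operator $H_j$ with the $n$-fold integral defining $\H_\varphi$. I would handle this by approximating $\varphi$ by compactly supported truncations $\varphi\chi_{[\delta,1/\delta]^n}$, for which the inner integral is absolutely convergent and Fubini applies, combined with a density argument in the test function $f$; the boundedness estimates from (\ref{key lemma, 1}) together with Theorem \ref{equivalent characterizations of H1} then let me pass to the limit in the $\phardy$-norm and remove the truncation. Once the commutation $H_j\H_\varphi=\H_\varphi H_j$ is established for each $j$, taking the product over the indices $j$ with $e_j=1$ yields $\mathbf{H_e}\H_\varphi=\H_\varphi\mathbf{H_e}$ for every $\mathbf{e}\in\mathbb{E}$, which is the assertion of Lemma \ref{commuting relation to the Hilbert transforms}.
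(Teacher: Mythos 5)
Your proposal is correct, but it follows a genuinely different route from the paper. The paper argues on the Fourier side: it invokes Corollary \ref{relation to the Fourier transform}(i) to write $\widehat{\H_\varphi H_j f}({\bf y})=\pint \widehat{H_jf}(t_1y_1,\ldots,t_ny_n)\,\pvarphi\,\pdt$, uses the multiplier identity $\widehat{H_jf}({\bf y})=-i\,\sign(y_j)\hat f({\bf y})$ together with the observation that $\sign(t_jy_j)=\sign(y_j)$ for $t_j>0$ (so the factor $-i\,\sign(y_j)$ pulls out of the $t$-integral), and concludes by uniqueness of the Fourier transform. You instead work on the physical side, exploiting the degree $-1$ homogeneity of the Hilbert kernel to get $H_j\bigl(g(\cdot/t_j)\bigr)=(H_jg)(\cdot/t_j)$ for $t_j>0$ and then passing $H_j$ under the $dt$-integral; these are two incarnations of the same dilation invariance, yours in space, the paper's in frequency. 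The one step you leave informal --- interchanging the principal-value operator with the $n$-fold integral --- does need an argument, and your truncation-plus-density scheme can be replaced by something cleaner: under (\ref{main inequality}) the integral defining $\H_\varphi f$ converges as a Bochner integral with values in $\phardy$, since $\left\|f\left(\frac{\cdot}{t_1},\ldots,\frac{\cdot}{t_n}\right)\right\|_{\phardy}\frac{\pvarphi}{\pt}=\pvarphi\,\|f\|_{\phardy}$ is integrable in ${\bf t}$, and the bounded operator $H_j$ commutes with Bochner integrals. The paper's Fourier argument sidesteps this interchange entirely, at the price of first establishing Corollary \ref{relation to the Fourier transform}; your version is more self-contained on the space side but carries the burden of that justification. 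Both approaches are valid, and your reduction from ${\bf H_e}$ to a single $H_j$ is the same as the paper's.
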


\begin{lemma}\label{key lemma, Hilbert transforms}
	Let $\varphi$ be such that (\ref{main inequality}) holds. Then:
	\begin{enumerate}[\rm (i)]
		\item $\H_\varphi$ is bounded on $(H^1(\pR),\|\cdot\|_*)$, moreover,
		$$\|\H_\varphi\|_{(H^1(\pR),\|\cdot\|_*)\to (H^1(\pR),\|\cdot\|_*)}\leq \pint \pvarphi \pdt.$$
		\item If supp $\varphi\subset [0,1]^n$, then
		$$\|\H_\varphi\|_{(H^1(\pR),\|\cdot\|_*)\to (H^1(\pR),\|\cdot\|_*)}= \int_0^1\cdots \int_0^1 \pvarphi \pdt.$$
	\end{enumerate}
\end{lemma}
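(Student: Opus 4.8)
The plan is to derive both parts from the already-established $\|\cdot\|_{\phardy}$ statements (Lemma \ref{key lemma}) by combining them with the commuting relation of Lemma \ref{commuting relation to the Hilbert transforms} and the $L^1$-mapping properties of $\H_\varphi$. Throughout, (\ref{main inequality}) is in force, so the case $p=1$ of Theorem \ref{sharp constants for Lp} applies: there condition (\ref{main inequality 1}) collapses to (\ref{main inequality}) and shows $\H_\varphi$ is bounded on $L^1(\R^n)$ with $\|\H_\varphi\|_{L^1(\R^n)\to L^1(\R^n)}=\pint\pvarphi\pdt$.

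For part (i), fix $f\in\phardy$. Each ${\bf H_e}f$ lies in $L^1(\R^n)$ by Theorem \ref{equivalent characterizations of H1}, and $\H_\varphi f\in\phardy$ by Lemma \ref{key lemma}(i), so $\|\H_\varphi f\|_*$ is well defined. Using Lemma \ref{commuting relation to the Hilbert transforms} to replace ${\bf H_e}\H_\varphi f$ by $\H_\varphi{\bf H_e}f$ and then the $L^1$ bound term by term, I obtain
$$\|\H_\varphi f\|_*=\sum_{{\bf e}\in\mathbb E}\|\H_\varphi{\bf H_e}f\|_{L^1(\R^n)}\le\left(\pint\pvarphi\pdt\right)\sum_{{\bf e}\in\mathbb E}\|{\bf H_e}f\|_{L^1(\R^n)}=\left(\pint\pvarphi\pdt\right)\|f\|_*,$$
which is exactly the asserted bound.

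For part (ii) the upper bound is immediate from (i), since $\mathrm{supp}\,\varphi\subset[0,1]^n$. For the matching lower bound I would recycle the holomorphic test functions $f_\varepsilon$ (boundary values of $F_\varepsilon(z)=\prod_j(z_j+i)^{-(1+\varepsilon)}$) from the proof of Lemma \ref{key lemma}(ii). The crucial new observation is that, because $F_\varepsilon$ is holomorphic in each variable on the upper half-plane, the Fourier transform of $f_\varepsilon$ is supported in $[0,\infty)^n$; since $H_j$ has multiplier $-i\,\sign(\xi_j)$, this forces $H_jf_\varepsilon=-if_\varepsilon$ and hence ${\bf H_e}f_\varepsilon=(-i)^{e_1+\cdots+e_n}f_\varepsilon$ for every ${\bf e}$. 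Thus $|{\bf H_e}f_\varepsilon|=|f_\varepsilon|$ pointwise and $\|f_\varepsilon\|_*=2^n\|f_\varepsilon\|_{L^1(\R^n)}$. Writing $\varphi_\delta:=\varphi\chi_{[\delta,1]^n}$ and $g_\varepsilon:=\H_{\varphi_\delta}(f_\varepsilon)-f_\varepsilon\int_{(0,\infty)^n}\varphi_\delta({\bf t})d{\bf t}$ (which lies in $\phardy$ by Lemma \ref{key lemma}(i)), one more application of Lemma \ref{commuting relation to the Hilbert transforms} gives ${\bf H_e}g_\varepsilon=(-i)^{e_1+\cdots+e_n}g_\varepsilon$, so likewise $\|g_\varepsilon\|_*=2^n\|g_\varepsilon\|_{L^1(\R^n)}$.

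These two identities reduce the $\|\cdot\|_*$ ratio to the $L^1$ ratio, $\|g_\varepsilon\|_*/\|f_\varepsilon\|_*=\|g_\varepsilon\|_{L^1(\R^n)}/\|f_\varepsilon\|_{L^1(\R^n)}$, and since $\|g_\varepsilon\|_{L^1(\R^n)}\lesssim\|g_\varepsilon\|_{\phardy}$ while $\|f_\varepsilon\|_{\phardy}\lesssim\|f_\varepsilon\|_{L^1(\R^n)}$ (Theorem \ref{boundary value function}, with constants independent of $\varepsilon$ as in (\ref{key lemma, 3})), the estimate (\ref{an useful estimate}) already proven for Lemma \ref{key lemma}(ii) shows this ratio tends to $0$ as $\varepsilon\to0$. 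Then $\|\H_{\varphi_\delta}f_\varepsilon\|_*\ge\big(\int_{(0,\infty)^n}\varphi_\delta\big)\|f_\varepsilon\|_*-\|g_\varepsilon\|_*$, divided by $\|f_\varepsilon\|_*$ and sent $\varepsilon\to0$, yields $\|\H_{\varphi_\delta}\|_{(\phardy,\|\cdot\|_*)\to(\phardy,\|\cdot\|_*)}\ge\int_\delta^1\cdots\int_\delta^1\pvarphi\pdt$. Applying part (i) to the nonnegative kernel $\varphi-\varphi_\delta$ gives $\|\H_\varphi-\H_{\varphi_\delta}\|_*\le\int_{(0,1]^n\setminus[\delta,1]^n}\varphi({\bf t})d{\bf t}\to0$, and letting $\delta\to0$ produces $\|\H_\varphi\|_*\ge\int_0^1\cdots\int_0^1\pvarphi\pdt$, finishing (ii). The step I expect to require the most care is verifying that each $H_j$ acts as a unimodular scalar on $f_\varepsilon$ (equivalently, pinning down the Fourier support of the boundary value), since this is precisely what collapses $\|\cdot\|_*$ to a fixed multiple of $\|\cdot\|_{L^1}$ and lets me reuse the delicate $\varepsilon$-estimate (\ref{an useful estimate}) verbatim; the remaining steps are bookkeeping with the already-proven $L^1$ and $\phardy$ bounds.
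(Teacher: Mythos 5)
Your proof is correct and follows essentially the same route as the paper: part (i) is exactly the paper's argument (commute ${\bf H_e}$ past $\H_\varphi$ and apply the $L^1$ bound term by term), and for part (ii) the paper simply says to repeat the proof of Lemma \ref{key lemma}(ii) using the estimate (\ref{an useful estimate}) together with $\|\cdot\|_*\sim\|\cdot\|_{\phardy}$, which is what you do. Your extra observation that $\hat f_\varepsilon$ is supported in $[0,\infty)^n$, so that ${\bf H_e}f_\varepsilon=(-i)^{|{\bf e}|}f_\varepsilon$ and $\|f_\varepsilon\|_*=2^n\|f_\varepsilon\|_{L^1(\R^n)}$, is a correct and pleasant refinement, though not strictly needed, since the two-sided equivalence of norms already forces the error ratio to vanish as $\varepsilon\to 0$.
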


\begin{proof}[Proof of Lemma \ref{commuting relation to the Hilbert transforms}]
	Since Theorem \ref{main theorem} and the fact that $H_j$'s are bounded on $\phardy$, it suffices to prove
	\begin{equation}\label{commuting relation to the Hilbert transforms, 1}
		\H_\varphi H_j f= H_j \H_\varphi f
	\end{equation}
	for all $j\in\{1,\ldots,n\}$ and all $f\in\phardy$. Indeed, thanks to the ideas from \cite{An, LM2, LM3} and Lemma \ref{relation to the Fourier transform}(i), for almost every ${\bf y}=(y_1,\ldots,y_n)\in\R^n$,
	\begin{eqnarray*}
		\widehat{\H_\varphi H_j f}({\bf y}) &=& \pint \widehat{H_j f}(t_1 y_1,\ldots,t_n y_n)\pvarphi\pdt\\
		&=& \pint (-i \,\sign(t_j y_j)) \hat{f}(t_1 y_1,\ldots,t_n y_n)\pvarphi\pdt\\
		&=& (-i\, \sign y_j)\widehat{\H_\varphi f}({\bf y})= \widehat{H_j\H_\varphi f}({\bf y}).
	\end{eqnarray*}
	This proves (\ref{commuting relation to the Hilbert transforms, 1}), and thus ends proof of Lemma \ref{commuting relation to the Hilbert transforms}, since the uniqueness of the Fourier transform. 	
\end{proof}

\begin{proof}[Proof of Lemma \ref{key lemma, Hilbert transforms}]
	(i) For all $f\in \phardy$ and all ${\bf e}\in\mathbb E$, by Lemma \ref{commuting relation to the Hilbert transforms} and Theorem \ref{sharp constants for Lp}, we get
	\begin{eqnarray*}
		\|{\bf H_e} \H_\varphi f\|_{L^1(\R^n)} &=& \|\H_\varphi {\bf H_e}f\|_{L^1(\R^n)}\\
		&\leq& \pint\pvarphi\pdt \|{\bf H_e}f\|_{L^1(\R^n)}.
	\end{eqnarray*}
	This proves that
	$$\|\H_\varphi\|_{(H^1(\pR),\|\cdot\|_*)\to (H^1(\pR),\|\cdot\|_*)}\leq \pint \pvarphi \pdt.$$
	
	(ii) The proof is similar to that of  Lemma \ref{key lemma}(ii) and will be omitted. The key point is the estimate (\ref{an useful estimate}) and the fact that $\|\cdot\|_{*}\sim \|\cdot\|_{\phardy}$.

\end{proof}

\begin{proof}[\bf Proof of Theorem \ref{replace by the Hilbert transforms}]
	The proof is similar to that of Theorem \ref{main theorem} by Lemma \ref{key lemma, Hilbert transforms}. We leave the details to the interested readers.
\end{proof}
	
\vskip 0.5cm

{\bf Acknowledgements.}  The authors would like to thank the referees for their
carefully reading and helpful suggestions.

\end{document}